\numberwithin{equation}{section}
\begin{document}

\newtheorem{problem}{Problem}
\newtheorem{theorem}{Theorem}[section]
\newtheorem{lemma}[theorem]{Lemma}
\newtheorem{corollary}[theorem]{Corollary}
\newtheorem{conjecture}[theorem]{Conjecture}
\newtheorem{example}[theorem]{Example}
\newtheorem{claim}[theorem]{Claim}
\newtheorem{cor}[theorem]{Corollary}
\newtheorem{prop}[theorem]{Proposition}
\newtheorem{definition}[theorem]{Definition}
\newtheorem{question}[theorem]{Question}
\newtheorem{conj}{Conjecture}
\newtheorem{hypothesis}[theorem]{Hypothesis}
\newtheorem{condition}[theorem]{Condition}
\def\cA{{\mathcal A}}
\def\cB{{\mathcal B}}
\def\cC{{\mathcal C}}
\def\cD{{\mathcal D}}
\def\cE{{\mathcal E}}
\def\cF{{\mathcal F}}
\def\cG{{\mathcal G}}
\def\cH{{\mathcal H}}
\def\cI{{\mathcal I}}
\def\cJ{{\mathcal J}}
\def\cK{{\mathcal K}}
\def\cL{{\mathcal L}}
\def\cM{{\mathcal M}}
\def\cN{{\mathcal N}}
\def\cO{{\mathcal O}}
\def\cP{{\mathcal P}}
\def\cQ{{\mathcal Q}}
\def\cR{{\mathcal R}}
\def\cS{{\mathcal S}}
\def\cT{{\mathcal T}}
\def\cU{{\mathcal U}}
\def\cV{{\mathcal V}}
\def\cW{{\mathcal W}}
\def\cX{{\mathcal X}}
\def\cY{{\mathcal Y}}
\def\cZ{{\mathcal Z}}

\def\A{{\mathbb A}}
\def\B{{\mathbb B}}
\def\C{{\mathbb C}}
\def\D{{\mathbb D}}
\def\E{{\mathbb E}}
\def\F{{\mathbb F}}
\def\G{{\mathbb G}}
\def\I{{\mathbb I}}
\def\J{{\mathbb J}}
\def\K{{\mathbb K}}
\def\L{{\mathbb L}}
\def\M{{\mathbb M}}
\def\N{{\mathbb N}}
\def\O{{\mathbb O}}
\def\P{{\mathbb P}}
\def\Q{{\mathbb Q}}
\def\R{{\mathbb R}}
\def\S{{\mathbb S}}
\def\T{{\mathbb T}}
\def\U{{\mathbb U}}
\def\V{{\mathbb V}}
\def\W{{\mathbb W}}
\def\X{{\mathbb X}}
\def\Y{{\mathbb Y}}
\def\Z{{\mathbb Z}}

\def\e{{\mathbf{e}}}
\def\ep{{\mathbf{e}}_p}
\def\eq{{\mathbf{e}}_q}

\newcommand{\argmin}{\operatornamewithlimits{argmin}}

\DeclarePairedDelimiter\ceil{\lceil}{\rceil}
\DeclarePairedDelimiter\floor{\lfloor}{\rfloor}

\def\scr{\scriptstyle}
\def\({\left(}
\def\){\right)}
\def\[{\left[}
\def\]{\right]}
\def\<{\langle}
\def\>{\rangle}
\def\fl#1{\left\lfloor#1\right\rfloor}
\def\rf#1{\left\lceil#1\right\rceil}
\def\le{\leqslant}
\def\ge{\geqslant}
\def\eps{\varepsilon}
\def\mand{\qquad\mbox{and}\qquad}

\def\sssum{\mathop{\sum\ \sum\ \sum}}
\def\ssum{\mathop{\sum\, \sum}}
\def\ssumw{\mathop{\sum\qquad \sum}}

\def\vec#1{\mathbf{#1}}
\def\inv#1{\overline{#1}}
\def\num#1{\mathrm{num}(#1)}
\def\dist{\mathrm{dist}}

\def\fA{{\mathfrak A}}
\def\fB{{\mathfrak B}}
\def\fC{{\mathfrak C}}
\def\fU{{\mathfrak U}}
\def\fM{{\mathfrak M}}
\def\fm{{\mathfrak m}}
\def\fV{{\mathfrak V}}

\newcommand{\bflambda}{{\boldsymbol{\lambda}}}
\newcommand{\bfxi}{{\boldsymbol{\xi}}}
\newcommand{\bfrho}{{\boldsymbol{\rho}}}
\newcommand{\bfnu}{{\boldsymbol{\nu}}}
\newcommand{\psum}{\sideset{}{^*}\sum}

\def\GL{\mathrm{GL}}
\def\SL{\mathrm{SL}}

\def\Hba{\overline{\cH}_{a,m}}
\def\Hta{\widetilde{\cH}_{a,m}}
\def\Hb1{\overline{\cH}_{m}}
\def\Ht1{\widetilde{\cH}_{m}}

\def\flp#1{{\left\langle#1\right\rangle}_p}
\def\flm#1{{\left\langle#1\right\rangle}_m}
\def\dmod#1#2{\left\|#1\right\|_{#2}}
\def\dmodq#1{\left\|#1\right\|_q}

\newcommand*\diff{\mathop{}\!\mathrm{d}}
\newcommand{\funccol}{\colon \thinspace}
\setlength\parindent{0pt}

\def\Zm{\Z/m\Z}

\def\Err{{\mathbf{E}}}

\newcommand{\comm}[1]{\marginpar{%
\vskip-\baselineskip 
\raggedright\footnotesize
\itshape\hrule\smallskip#1\par\smallskip\hrule}}

\def\xxx{\vskip5pt\hrule\vskip5pt}



\title[Primes in short arithmetic progressions]{Refinements for primes in short arithmetic progressions}
\author{Michael Harm}
\address{School of Mathematics and Statistics, UNSW, 
Sydney, NSW 2052, Australia}
\email{m.harm@unsw.edu.au}

\begin{abstract}
    Given a zero-free region and an averaged zero-density estimate over all Dirichlet $L$-functions modulo $q\in\N$, we refine the error terms of the prime number theorem in all and almost all short arithmetic progressions. For example, if we assume the Generalized Density Hypothesis, then for any arithmetic progression modulo $q\leq \log^{\ell} x$ with $\ell>0$ and any $\varepsilon>0$, the prime number theorem holds in all intervals $(x-\sqrt{x}\exp(\log^{\frac{2}{3}+\varepsilon} x),x]$ and almost all intervals $(x-\exp(\log^{\frac{2}{3}+\varepsilon} x),x]$ as $x\rightarrow\infty$. This refines the classic intervals $(x-x^{1/2+\varepsilon},x]$ and $(x-x^\varepsilon,x]$ for any $\varepsilon>0$.
\end{abstract}
\maketitle
\thispagestyle{empty}

\section{Introduction}
Let $p$ always denote a prime, and let $a,q\in\N$ with $(a,q)=1$. We denote the error term of the (unweighted) prime counting function in a short arithmetic progression $a$ modulo $q$ by
\begin{equation*}
    \Delta_\pi(x,y,q,a):=\sum_{\substack{x-y< p\leq x\\p\equiv a\thinspace (q)}}1 -\frac{\text{li}(x,y)}{\varphi(q)},
\end{equation*}
where $\varphi$ denotes the Euler totient function and $\text{li}(x,y):=\int_{x-y}^x\frac{\diff t}{\log t}$ denotes the logarithmic integral. We say that the Prime Number Theorem (PNT) holds for an arithmetic progression $a$ modulo $q$ in the interval $(x-y,x]$, with $y=y(x)$, if 
\begin{equation}\label{eq: PNT holds for all}
    \Delta_\pi(x,y,q,a)= o\bigg(\frac{y}{\varphi(q)\log x}\bigg) \quad \text{as} \quad x\rightarrow\infty.
\end{equation}
The connection between prime counting functions and zero-free regions of Dirichlet $L$-functions is well known via Perron's formula. For a Dirichlet character $\chi$ modulo $q$, let $\cN(\sigma, T,\chi)$ denote the number of zeroes $\rho=\beta+i\gamma$ of the Dirichlet $L$-function $L(s,\chi)$ with $\beta> \sigma$ and $|\gamma|\leq T$.

\begin{condition}\label{hyp: zero-free region}
    Let $q\leq Q$ and let $T_0>0$ (possibly) depend on $Q$. There exists a continuous, non-increasing function $0<\eta(T)\leq 1/2$ in $T\geq T_0$, such that
    \begin{equation*}
        \cN(1-\eta(T),T,\chi)=0,
    \end{equation*}
    for all Dirichlet characters $\chi$ modulo $q$, except possibly for the quadratic character modulo $q$, in which case there may be at most one real "exceptional" zero $\beta_0<1$ in this region. 
\end{condition}
We also denote $\eta_0:=\lim_{T\rightarrow \infty}\eta(T)$ and
    \begin{equation*}
        \omega(x):=\log x \thinspace\eta(\argmin_{T_0\leq T\leq x}(\log x \thinspace\eta(T)+\log T)),
    \end{equation*}
where $\argmin_S(f)$ denotes the element of the set $S$ which minimizes the function $f$. Note that we slightly altered the conventional definition of $\omega$ to include constant functions $\eta(T)=\eta_0$. The best currently known zero-free region is given by
\begin{equation}\label{eq: Vinogradov-Korobov region}
    \eta(T)= \frac{c}{(\log T)^{2/3} \thinspace(\log\log T)^{1/3}}
\end{equation}
for $q\leq \exp(\eta^{-1}(T))$ and some constant $c>0$. For the principal character this has been independently proven by Vinogradov \cite{Vin58} and Korobov \cite{Kor58}. For an (explicit) generalization to $q\geq 3$ see e.g. \mbox{Khale \cite[Theorem 1.1]{Kha23}}. In this case we have $\omega(x)\asymp (\log x)^{3/5}(\log\log x)^{-1/5}$ \mbox{(see \cite[Lemma A.3]{JY23}).} The Generalized Riemann Hypothesis (GRH) conjectures that we may take $\eta_0=1/2$ for all $q\in\N$ (and the absence of the exceptional zero), however, this seems to be beyond the current scope of mathematics. Ingham \cite{Ing32} showed that, assuming Condition \ref{hyp: zero-free region}, we have 
\begin{equation*}
    \Delta_\pi(x,x,1,1)\ll x\exp(-(1-\varepsilon) \omega(x))
\end{equation*}
for any $\varepsilon>0$, implying that \eqref{eq: PNT holds for all} holds for $q=1$ and $y\in[x\exp(-(1-\varepsilon) \omega(x)), x]$. \\

\begin{condition}\label{hyp: zero-density}
    Let $q\in\N$ and let $T_0>0$ (possibly) depend on $q$. There exist a constant $A\geq 2$ and a function $1\ll g(q,T)\ll (qT)^{o(1)}$, such that for $T\geq T_0$ and $1/2\leq\sigma\leq 1$, we have
\begin{equation*}                                                                                  
        \sum_{\chi\thinspace(q)}\cN(\sigma,T,\chi)\ll (qT)^{A(1-\sigma)}g(q,T),
    \end{equation*}
    where the sum runs over all Dirichlet characters $\chi$ modulo $q$.
\end{condition}
The motivation for this paper is in part due to recent advancements regarding zero-density estimates and to give a direct correspondence between those and the PNT in short arithmetic progressions. Until recently the best zero-density estimate for Dirichlet $L$-functions could be attributed to Huxley with $A=\frac{12}{5}$ \cite[Equation 1.1]{Hux75}. However, a recent result by Guth-Maynard \cite{GM24} gives $A=\frac{30}{13}$ for $q=1$. Chen \cite{Che25} was able to improve Huxley's result using the methods of Guth-Maynard, per Chen's result we may take $A=\frac{7}{3}$ for all $q\in\N$. Barring any major roadblocks, it is feasible to expect Chen's result to be further improved to $A=\frac{30}{13}$ for any $q\in\N$. Note that if Condition \ref{hyp: zero-free region} is true for a constant $\eta_0>0$, then \mbox{Theorem \ref{thm: vertical zeroes}} below implies a zero-density estimate with $A=\frac{1}{\eta_0}$ and $g(q,T)=\log qT$. Therefore the Generalized Density Hypothesis, which is the case $A=2$, is considered a weaker version of GRH.\\

Using zero-density estimates, Hoheisel \cite{Hoh30} was the first to make significant progress towards the PNT in short intervals. He showed that \eqref{eq: PNT holds for all} holds for $q=1$ and $y=x^{1-\frac{1}{33000}+\varepsilon}$ for any $\varepsilon>0$. Ingham \cite{Ing37} improved and generalized Hoheisel's approach. He showed that Condition \ref{hyp: zero-density} implies that \eqref{eq: PNT holds for all} holds for $q=1$ and $y=x^{1-\frac{1}{A}+\varepsilon}$, for any $0<\varepsilon\leq \frac{1}{A}$. It is well-known that this can be generalized to all sufficiently small $q$. Recently \mbox{Starichkova \cite{Sta25}} refined Ingham's result, such that \eqref{eq: PNT holds for all} holds for $q=1$ and $y=x^{1-\frac{1}{A}}\exp( \log^{\frac{2}{3}+\varepsilon} x)$ for any $\varepsilon>0$, if we assume Condition \ref{hyp: zero-free region}. The first set of results in this paper aim to use Ingham's method to generalize Starichkova's refined result to arithmetic progressions modulo $q$. Furthermore, we provide an improved error term, this can be useful for e.g. circle method proofs, where we often need some logarithmic powers as leeway.\\

Unfortunately, Ingham's method will not allow us to prove the PNT for intervals of size $\cO(\sqrt{x})$ due to the symmetry of the zeroes of Dirichlet $L$-functions. For most practical applications, however, it often suffices if the PNT in arithmetic progressions modulo $q$ holds for "almost all" short intervals averaged over $q$. That is, for some $q\in\N$ and $h=h(X)$, we have
\begin{equation}\label{eq: PNT for almost all}
    \psum_{a\thinspace (q)}\int_X^{2X}|\Delta_\pi(u,h,q,a)|^2\diff u = o\bigg(\frac{h^2X}{\varphi(q)\log^2 X}\bigg),
\end{equation}
as $X\rightarrow\infty$, where the sum runs through a set of representatives of the multiplicative group $\Z/q\Z^*$. The first result of this type is due to Selberg \cite{Sel43}, who proved for $0<\theta\leq X^{-1/4}$ and under the assumption of the Riemann Hypothesis an upper bound for the integral
\begin{equation}\label{eq: Selberg}
    \int_1^{X}|\Delta_\pi(u,\theta u,1,1)|^2 u^{-2} \diff u\ll \theta ,
\end{equation}
which implies that \eqref{eq: PNT for almost all} holds for $q=1$ and $h\in[\log^{2+\varepsilon}X,X]$ for any $\varepsilon>0$. He also gave an unconditional result derived from a zero-free region by Chudakov \mbox{$\eta(T)\asymp (\log\log T)^2/\log T$} and Ingham's zero-density estimate $A=77/29$, thereby implying that \eqref{eq: PNT for almost all} holds for \mbox{$q=1$} and $h\in[X^{1-\frac{2}{A}+\varepsilon},X]$. Prachar \cite{Pra74} generalized Selberg's result \eqref{eq: Selberg} to averages over arithmetic progressions, i.e. that \eqref{eq: PNT for almost all} holds for $h\in[q\log^{2+\varepsilon}qX,X]$, for any $\varepsilon>0$.\\ \mbox{Saffari and Vaughan \cite{SV77}} formalized Selberg's implied result in terms of arbitrary zero-density estimates. By assuming the pair correlation conjecture (and GRH), Goldston-Yildirim \mbox{\cite{GY98}} were able to make various improvements to Prachar's bound for short segments of arithmetic progressions. There also exist similar results of the form
\begin{equation}\label{eq: average unsquared PNT}
    \sum_{q\leq Q}\max_{a\thinspace (q)}\int_X^{2X} |\Delta_\pi(u,h,q,a)|\diff u=o\bigg(\frac{hX}{\log X}\bigg).
\end{equation}
The strongest such result is by Koukoulopoulos \cite{Kou15}, who, assuming Condition \ref{hyp: zero-density} holds averaged over $q\leq Q$, showed that we have \eqref{eq: average unsquared PNT} for $1\leq h\leq X$ and $1\leq Q^2\leq h /X^{1-\frac{2}{A}+\varepsilon}$ for any $\varepsilon\in (0,1/3]$.
For the second set of results in this paper we refine the results of Saffari-Vaughan \cite{SV77} and generalize them to hold for arithmetic progressions. We state these results conditionally on the reader's choice of zero-free regions and zero-density estimates. This result is again particularly useful to bound error terms on the major arcs for the Hardy-Littlewood circle method. Since for $h\geq 1$, \mbox{Gallagher's inequality \cite[Lemma 1]{Gal70}} and Lemma \ref{lemma: int psi to chi} give
\begin{equation*}\label{eq: gallagher}
    \int_{-h^{-1}}^{h^{-1}}\bigg| \sum_{X<p\leq 2X}\e\bigg(p(\alpha+\frac{a}{q})\bigg) -\frac{\mu(q)}{\varphi(q)}\int_X^{2X}\frac{\e(t\alpha)}{\log t}\diff t \bigg|^2\diff \alpha\ll \frac{q}{h^2}\int_X^{2X} |\Delta_\pi(u,\frac{h}{2},q,a)|^2\diff u +X.
\end{equation*}

\section{Main results}

Note that while the results of this paper are formulated in terms of the error terms $\Delta_\pi(x,y,q,a)$ they can easily be modified into the error terms of other twisted prime counting functions by \mbox{Lemmas \ref{lemma: psi to vartheta}--\ref{lemma: int psi to chi}.} This is particularly relevant for applications to e.g. the circle method, as the error terms $\Delta_\pi(x,y,a/q)$ (see \ref{it: additive}) typically have an extra factor of $\sqrt{q}$ from the associated Gauss sums.

\subsection{Primes in all short arithmetic progressions}
Obviously Condition \ref{hyp: zero-free region} is far stronger than Condition \ref{hyp: zero-density}, in fact, $\eta_0>0$ induces a zero density estimate with $A=\frac{1}{\eta_0}$. We therefore may always assume $A\leq \frac{1}{\eta_0}$. Using zero-density estimates may also lead to worse error terms as $y\rightarrow x$ and may incur additional small logarithmic factors, which is relevant if we e.g. assume GRH. It is therefore useful to have Ingham's result \cite{Ing32} as a baseline.
\begin{theorem}\label{thm: Ingham}
    Assume Condition \ref{hyp: zero-free region}. Let $q\leq Q$ and $y\in [qx \exp(-\omega(x))\log^{2+\varepsilon} qx, x]$ for any $\varepsilon>0$. We have
    \begin{equation*}
         \Delta_\pi(x,y,q,a)\ll \frac{\cB(x,q)}{\log x} + x\log qx \exp(-\omega(x)).
    \end{equation*}
    where $\cB(x,q)=\frac{x^{\beta_0}}{\varphi(q)}$ if the $L$-function associated to the quadratic Dirichlet character modulo $q$ has an exceptional zero $\beta_0$ and $\cB(x,q)=0$ otherwise. We may omit the term $\cB$ if we additionally restrict $q\leq (\frac{\log x}{\omega(x)})^\ell$ for any $\ell>0$.
\end{theorem}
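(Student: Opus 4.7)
The plan is to follow Ingham's classical approach, adapted to arithmetic progressions. First I would apply Lemma \ref{lemma: psi to vartheta} to reduce the claimed estimate for $\Delta_\vartheta$ to the analogous bound on $\Delta_\psi(x,y,q,a):=\psi(x+y,q,a)-\psi(x,q,a)-y/\varphi(q)$; the difference is $\cO(\sqrt{x})$, which is absorbed into the target. Character orthogonality gives
\begin{equation*}
\Delta_\psi(x,y,q,a) = \frac{1}{\varphi(q)}\sum_{\chi\pmod q}\overline{\chi}(a)\bigl[(\psi(x+y,\chi)-\psi(x,\chi))-\delta_{\chi=\chi_0}\,y\bigr],
\end{equation*}
so it suffices to estimate each summand and sum on $\chi$.

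For each $\chi$ I would apply the truncated explicit formula
\begin{equation*}
\psi(x,\chi) = \delta_{\chi=\chi_0}\,x - \sum_{\rho:\,|\gamma|\le T}\frac{x^\rho}{\rho} + O\!\left(\frac{x\log^2(qxT)}{T}\right),
\end{equation*}
valid for $T_0\le T\le x$, and subtract the corresponding expression at $x+y$. By Condition \ref{hyp: zero-free region}, every nontrivial zero $\rho=\beta+i\gamma$ with $|\gamma|\le T$ satisfies $\beta\le 1-\eta(T)$, apart from the possible exceptional real zero $\beta_0$ of the quadratic character; this latter zero contributes precisely $\cB(x,q)=x^{\beta_0}/\varphi(q)$ after averaging over $\chi$. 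Combining the crude estimate $|(x+y)^\rho-x^\rho|\ll x^\beta$ (using $|y|\le x$) with the Riemann--von Mangoldt bound $\sum_{\chi\pmod q}\sum_{|\gamma|\le T}|\rho|^{-1}\ll\varphi(q)\log^2(qT)$ yields
\begin{equation*}
|\Delta_\psi(x,y,q,a)|\ll \cB(x,q) + \bigl(x^{1-\eta(T)}+x/T\bigr)\log^2(qxT).
\end{equation*}

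Now I would optimize in $T$: the definition of $\omega$ is precisely tailored so that taking $T$ near the argmin of $\log x\,\eta(T)+\log T$ (or $T=x$ when $\eta$ is essentially constant) makes both terms $\ll x\exp(-\omega(x))$, producing the claimed error $x\exp(-\omega(x))\log^2(qx)$ once $\log qT\le\log qx$ is used to bound the log factors. Finally, to omit $\cB(x,q)$ in the range $q\le(\log x/\omega(x))^\ell$, I would invoke Siegel's theorem: for any fixed $\varepsilon>0$, $\beta_0\le 1-c(\varepsilon)q^{-\varepsilon}$, hence $x^{\beta_0}/\varphi(q)\le x\exp(-c(\varepsilon)q^{-\varepsilon}\log x)$; choosing $\varepsilon$ small enough in terms of $\ell$ yields $q^{-\varepsilon}\log x\gg\omega(x)$, absorbing $\cB$. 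The delicate point is that Siegel's constant is ineffective, so the absorption of $\cB$ is only valid for $x$ sufficiently large in terms of $\ell$; aside from this, the argument follows the standard Ingham template.
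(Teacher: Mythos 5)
Your proposal is correct and follows essentially the same route as the paper: reduce $\Delta_\vartheta$ to $\Delta_\psi$ via Lemma \ref{lemma: psi to vartheta}, expand over characters as in Lemma \ref{lemma: S as lin comb of psi}, apply the truncated explicit formula (Theorem \ref{thm: chebyshev approx zeroes}) with the zero-free region and the vertical zero count (Theorem \ref{thm: vertical zeroes}), optimize in $T$ according to the definition of $\omega$, and remove $\cB$ with Siegel's theorem (Theorem \ref{thm: Siegel zero bound}) exactly as in \eqref{eq: omit siegel zeroes}. Your explicit remark that one should take $T=x$ when $\eta$ is essentially constant, and your note on the ineffectivity of Siegel's constant, are if anything slightly more careful than the paper's own write-up.
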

Theorem \ref{thm: Ingham} together with the Vinogradov-Korobov zero free region (see \cite{Vin58}, \cite{Kor58} \& \cite{Kha23}) imply the following unconditional Corollary due to \cite[Lemma A.3]{JY23}.
\begin{corollary}
    Let $q\leq \log^\ell x$ for any $\ell>0$ and let $0<\varepsilon_1<\varepsilon_2<1$. There exists a constant $c>0$ such that for any 
    \begin{equation*}
        y\in\bigg[x\exp\bigg(-\varepsilon_1\frac{c\log^\frac{3}{5}x}{(\log\log x)^\frac{1}{5}}\bigg),x\bigg],
    \end{equation*}
    we have
    \begin{equation*}
        \Delta_\pi(x,y,q,a)\ll x\exp\bigg(-\varepsilon_2\frac{c\log^\frac{3}{5}x}{(\log\log x)^\frac{1}{5}}\bigg). 
    \end{equation*}
\end{corollary}
If we assume GRH instead, Theorem \ref{thm: Ingham} implies the following Corollary.
\begin{corollary}
    Assume GRH. Let $y\in[q\sqrt{x}\log^{2+\varepsilon}qx,x]$ for any $\varepsilon>0$. We have 
    \begin{equation*}
        \Delta_\pi(x,y,q,a)\ll \sqrt{x}\log qx.
    \end{equation*}
    
\end{corollary}

By invoking zero-density estimates we can significantly reduce the size of the arithmetic progressions in which the PNT holds.

\begin{theorem}\label{thm: primes in all intervals}
    Assume Conditions \ref{hyp: zero-free region} \& \ref{hyp: zero-density} with $\eta_0<\frac{1}{A}$. Let $q\leq Q$ and 
    \begin{equation*}
        y\in\bigg[q^2x^{1-\frac{1}{A}}\exp\bigg(\frac{\eta^{-1}(x^{1/A})}{A}\log(g(q,x))\bigg)\log^{1+\tau+\varepsilon} qx,x\bigg],
    \end{equation*} for any $\varepsilon>0$ and where $\tau=\tau(x,A,\eta)=\frac{1}{1+A\eta(x^{1/A})}$. We have
    \begin{equation}\label{eq: error term main}
        \Delta_\pi(x,y,q,a) \ll \frac{y\cB(x,q)}{x\log x}+ y \bigg(\frac{x^{1-\frac{1}{A}}}{y}\bigg)^{1-\tau}q^{1-2\tau}g^\tau(q,x)\log^{1-\tau} qx  .
    \end{equation}
    We may omit the term $\cB$ if we additionally restrict $q\leq \log^\ell x$ for any $\ell>0$.
\end{theorem}
While zero-density estimates are great for reducing the size of $y$, they don't contribute much saving for the error term \eqref{eq: error term main}. Clearly, the saving in the error term \eqref{eq: error term main} is mostly dependent on $(\frac{x^{1-\frac{1}{A}}}{y})^{1-\tau}$, for $\frac{1}{2}<\tau<1$. This indicates that better savings are mostly attributed to \mbox{zero-free regions} with $A\eta(x)\rightarrow1$. Note that for $\tau=\frac{1}{2}$ we may refer to Theorem \ref{thm: Ingham}. Theorem \ref{thm: primes in all intervals} induces the following Corollaries.
\begin{corollary}\label{cor: all korobov 1}
    Let $0<\varepsilon_1<\varepsilon_2\leq \frac{1}{A}$ and $q\leq \log^{\ell} x$ for any $\ell>0$. Let $c>0$ denote a constant satisfying the Vinogradov-Korobov zero-free region \eqref{eq: Vinogradov-Korobov region}. Assume Condition \ref{hyp: zero-density} with 
    \begin{equation*}
        g(q,T)\ll \exp\bigg(\varepsilon_3\thinspace A^2\thinspace c\bigg(\frac{\log T^\frac{1}{A}}{\log\log T^\frac{1}{A}}\bigg)^\frac{1}{3}\bigg) 
    \end{equation*} 
    for any $0<\varepsilon_3<\varepsilon_2-\varepsilon_1$. For any $y\in [x^{1-\frac{1}{A}+\varepsilon_2},x]$ we have
    \begin{equation*}
        \Delta_\pi(x,y,q,a)\ll y\exp\bigg(-\varepsilon_1\thinspace A^2\thinspace c\bigg(\frac{\log x^\frac{1}{A}}{\log\log x^\frac{1}{A}}\bigg)^\frac{1}{3}\bigg).
    \end{equation*}
\end{corollary}
We may further decrease the size of $y$ by sacrificing some saving in the error term.
\begin{corollary}\label{cor: all korobov 2}
    Let $0<\varepsilon_1<\varepsilon_2< \frac{1}{3}$ and $q\leq \log^{\ell} x$ for any $\ell>0$. Assume Condition \ref{hyp: zero-density} with $g(q,T)\ll \exp(\log^{\varepsilon_3}T)$ for any $0<\varepsilon_3<\varepsilon_1$. For any
    \begin{equation*}
        y\in [x^{1-\frac{1}{A}}\exp(\log^{\frac{2}{3}+\varepsilon_2} x),x]
    \end{equation*}
    we have
    \begin{equation*}
        \Delta_\pi(x,y,q,a)\ll y\exp(-\log^{\varepsilon_1}x).
    \end{equation*}
\end{corollary}
Corollaries \ref{cor: all korobov 1} \& \ref{cor: all korobov 2} are due to the (generalized) Vinogradov-Korobov zero-free region (see \cite{Vin58}, \cite{Kor58}, \cite{Kha23}) with the corresponding constant $c>0$. Note that we may take $A=\frac{7}{3}$ unconditionally due to Chen \cite{Che25}. We may also take $A=\frac{30}{13}$ for $q=1$ due to Guth-Maynard \cite{GM24} while we expect a further generalization of their result. 

\begin{corollary}\label{cor: main constant}
    Assume Conditions \ref{hyp: zero-free region} \& \ref{hyp: zero-density} with $\frac{1}{A}>\eta(T)=\eta_0>0$ and $g(q,T)=\log^B qT$ for some $B\geq 0$. We also assume the absence of exceptional zeroes. Let $C=2+\frac{1+B}{A\eta_0}+\varepsilon$ for any $\varepsilon>0$ and let
    \begin{equation*}
        y\in [q^2 x^{1-\frac{1}{A}}\log^C qx,x].
    \end{equation*}
    We have
    \begin{equation*}
        \Delta_\pi(x,y,q,a)\ll \frac{y}{q\log x}(\log qx)^{-\varepsilon \frac{ A\eta_0}{1+A\eta_0}}.
    \end{equation*}
 
\end{corollary}
The Conditions for Corollary \ref{cor: main constant} describe a weak case of GRH dominated by a zero-density estimate, such that $\frac{1}{2}\leq 1-\frac{1}{A}<1-\eta_0$. Note that Corollary \ref{cor: main constant} could be improved to $C>\max(1, \frac{1+B}{A\eta_0})$, if we apply a result by Cully-Hugill and Johnston \cite{CHJ25}. They showed that the error term in the \mbox{Riemann-von Mangoldt} truncated explicit formula can be improved on average. However, they have only proven this result for the case $q=1$.

\subsection{Primes in almost all short arithmetic progressions}
In many practical cases it suffices to show that the PNT in short arithmetic progressions holds for "almost all" intervals. In fact, we may significantly decrease the size $h$, and show that the density of intervals $(u-h,u]$ for $X\leq u\leq 2X$ where the PNT in arithmetic progressions holds is $1$ as $X\rightarrow\infty$. Our first result for this case again serves as a baseline in case the zero-density estimate induced by our zero-free region is the best available.
\begin{theorem}\label{thm: Ingham 2}
    Assume Condition \ref{hyp: zero-free region}. Let $q\leq Q$ and $h\in[q X^{1-2\eta(X)}\log^{2+\varepsilon} qX,X]$ for any $\varepsilon>0$. We have
    \begin{equation*}
        \psum_{a\thinspace(q)}\int_{X}^{2X}| \Delta_\pi(u,h,q,a)|^2 \diff u\ll \frac{h^2\cB(X^2,q)}{X\log^2 qX} + h X^{2-2\eta(X)} .
    \end{equation*}
    We may omit the term $\cB$ if we additionally restrict $q\leq \eta^{-\ell}(X)$ for any $\ell>0$.
\end{theorem}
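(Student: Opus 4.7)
The plan is to follow the classical Selberg--Prachar mean-value argument based on the explicit formula and orthogonality of Dirichlet characters. First I would invoke \mbox{Lemma \ref{lemma: psi to vartheta}} to pass from $\Delta_\vartheta$ to its $\psi$-analogue $\Delta_\psi$ at the cost of an $O(\sqrt{X}\log^2 qX)$ error per progression, which is absorbed into the main error in the stated range of $|h|$. Then orthogonality of Dirichlet characters modulo $q$ gives
\begin{equation*}
    \Delta_\psi(u,h,q,a) = \frac{1}{\varphi(q)}\sum_{\chi \bmod q}\bar\chi(a)\, E(u,h,\chi),
\end{equation*}
where $E(u,h,\chi):=\psi(u+h,\chi)-\psi(u,\chi)-h\cdot[\chi=\chi_0]$. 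Squaring, summing over invertible residues $a$ and applying orthogonality once more collapses the character sum, yielding
\begin{equation*}
    \psum_{a\thinspace (q)}|\Delta_\psi(u,h,q,a)|^2 = \frac{1}{\varphi(q)}\sum_{\chi\bmod q}|E(u,h,\chi)|^2,
\end{equation*}
which reduces the task to bounding $\int_X^{2X}|E(u,h,\chi)|^2\,\diff u$ for each Dirichlet character $\chi$.

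For each $\chi$ I would apply the truncated explicit formula with a large parameter $T$ (polynomial in $X$):
\begin{equation*}
    E(u,h,\chi) = -\sum_{|\gamma|\leq T}\frac{(u+h)^\rho - u^\rho}{\rho} + O\Bigl(\frac{X\log^2 qXT}{T}\Bigr).
\end{equation*}
A possible exceptional zero $\beta_0$ for a quadratic character is separated off; after squaring, integrating over $[X,2X]$ and averaging over characters its contribution matches the term $\frac{|h|^2}{X}\cB(X^2,q)\log q$. For the remaining zeros Condition \ref{hyp: zero-free region} gives $\beta\leq 1-\eta(|\gamma|)$, and \mbox{Theorem \ref{thm: vertical zeroes}} (a Riemann--von Mangoldt bound) controls the count by $N(T,\chi)\ll T\log qT$. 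Using the elementary estimate
\begin{equation*}
    \int_X^{2X}\Bigl|\frac{(u+h)^\rho-u^\rho}{\rho}\Bigr|^2 \diff u \ll X\min\Bigl(h^2 X^{2\beta-2},\,\frac{X^{2\beta}}{|\gamma|^2}\Bigr),
\end{equation*}
I split the zero-sum at height $|\gamma|\asymp X/|h|$. The low-frequency diagonal yields $|h| X^{2-2\eta(X)}\log(qX/|h|)$, and partial summation on the high-frequency tail gives a contribution of the same order. Off-diagonal cross terms are handled by a standard Hilbert--Montgomery type inequality, contributing at most an additional factor of $\log(qX/|h|)$. Summing over $\chi$ and dividing by $\varphi(q)$ gives the advertised main term.

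The main technical difficulty is balancing the explicit-formula truncation error $O(X^3\log^4(qXT)/T^2)$ per character against the zero-sum main term: for this to be absorbed one needs $T$ polynomially large and $|h|$ exceeding $qX^{1-2\eta(X)}$ by at least a power of $\log qX$, which is precisely the role played by the $\log^{2+\varepsilon} qX$ factor in the hypothesis. Finally, under the additional restriction $q\leq \eta^{-\ell}(X)$, a Page--Siegel type bound $\beta_0\leq 1-c(\varepsilon)/q^{\varepsilon}$ for any exceptional zero forces $X^{2\beta_0-1}\leq X^{1-2\eta(X)}$ with room to spare, so the exceptional contribution is absorbed into the main error and the term $\cB$ may be dropped.
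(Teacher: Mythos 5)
Your overall skeleton (pass from $\vartheta$ to $\psi$ via Lemma \ref{lemma: psi to vartheta}, collapse the sum over progressions by orthogonality as in Lemma \ref{lemma: int psi to chi}, apply the truncated explicit formula, separate the exceptional zero, split the zero sum at height $|\gamma|\asymp X/|h|$, and remove $\cB$ via Siegel) matches the paper's reduction. The gap is in the one step that carries the whole proof: the off-diagonal terms of the squared zero sum. You keep $h$ fixed and assert that "a standard Hilbert--Montgomery type inequality" handles the cross terms with a loss of only $\log(qX/|h|)$. This is not justified as stated, for two concrete reasons. First, the summands $\frac{(u+h)^\rho-u^\rho}{\rho}$ are not of the form $c_\rho u^{i\gamma}$ with $u$-independent coefficients, so mean-value theorems of Montgomery--Vaughan or Gallagher type do not apply off the shelf; moreover their hypotheses require control on the spacing of the frequencies $\gamma$, and no lower bound on gaps between zeros is available, so one must first cluster zeros into unit windows at the cost of the local density $\log q(|\gamma|+2)$, and then verify that this density factor is $\log\frac{qX}{|h|}$ in the dominant range $|\gamma|\le X/|h|$ while the tail $|\gamma|>X/|h|$ is saved by the $\gamma^{-2}$ decay. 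Second, the routine alternative --- bounding $\int_X^{2X}u^{i(\gamma_1-\gamma_2)}\diff u\ll X/(1+|\gamma_1-\gamma_2|)$ and summing --- genuinely loses more than you claim: since $\sum_{|\gamma_2|\le T}(1+|\gamma_1-\gamma_2|)^{-1}\asymp \log qT\,\log T$ (not a single logarithm), the low-frequency off-diagonal contribution comes out as $|h|X^{2-2\eta(X)}\log^2\frac{qX}{|h|}\log\frac{X}{|h|}$, one logarithm worse than the stated bound; this extra factor is fatal both at the bottom of the range (it would force $|h|\gg qX^{1-2\eta(X)}\log^{3+\varepsilon}qX$ rather than $\log^{2+\varepsilon}qX$) and when $|h|$ is close to $X$, where $\log\frac{2qX}{|h|}$ is bounded but $\log X$ is not.

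This is exactly the point where the paper deploys a different device: the Saffari--Vaughan substitution $h=\theta u$ (Lemma \ref{lemma: substitution}), which makes the coefficients $\frac{(1+\theta)^\rho-1}{\rho}$ independent of $u$, followed by an extra average over a dilation $v\in[1,2]$ of the range of integration (Lemma \ref{lemma: int theta chi as sum over zeroes}), which upgrades the off-diagonal kernel from $(1+|\gamma_1-\gamma_2|)^{-1}$ to $(1+|\gamma_1-\gamma_2|)^{-2}$; the inner zero sum is then just the local density $\log q(|\gamma_1|+2)$, costing exactly the one extra logarithm that appears in $\log^2\frac{2qX}{|h|}$. Your route could in principle be repaired (e.g.\ treat $|\gamma|\le X/|h|$ by writing the difference as $\int_0^h(u+t)^{\rho-1}\diff t$ and the tail by splitting $(u+h)^\rho$ and $u^\rho$, then apply a clustered Montgomery--Vaughan or Gallagher argument to each piece), but that analysis is precisely what is missing from the proposal; as written, the claimed $\log(qX/|h|)$ loss for the cross terms is an unproven assertion rather than a consequence of a standard inequality.
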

Theorem \ref{thm: Ingham 2} together with the Vinogradov-Korobov zero free region (see \cite{Vin58}, \cite{Kor58} \& \cite{Kha23}) imply the following unconditional Corollary due to \cite[Lemma A.3]{JY23}.
\begin{corollary}
    Let $q\leq \log^\ell x$ for any $\ell>0$ and let $0<\varepsilon_1<\varepsilon_2<2$. There exists a constant $c>0$ such that for any 
    \begin{equation*}
        h\in\bigg[x\exp\bigg(-\varepsilon_1 \frac{c\log^\frac{3}{5}x}{(\log\log x)^\frac{1}{5}}\bigg),x\bigg],
    \end{equation*}
    we have
    \begin{equation*}
        \psum_{a\thinspace(q)}\int_{X}^{2X}| \Delta_\pi(u,h,q,a)|^2 \diff u\ll h X^2 \exp\bigg(-\varepsilon_2 \frac{c\log^\frac{3}{5}x}{(\log\log x)^\frac{1}{5}}\bigg) .
    \end{equation*}
    For $n\in[X,2X]$ and any co-prime arithmetic progression $a\thinspace(q)$, there are at most
    \begin{equation*}
        \mathcal{O}\bigg(\frac{X^2}{h}\exp\bigg(-\varepsilon_2 \frac{c\log^\frac{3}{5}x}{(\log\log x)^\frac{1}{5}}\bigg)  \bigg)
    \end{equation*} intervals $(n-h,n]$ where the PNT does not hold as $X\rightarrow\infty$. 
\end{corollary}
If we assume GRH instead, Theorem \ref{thm: Ingham 2} implies the following Corollary (which was first shown by {Prachar \cite{Pra74}}).
\begin{corollary}
    Assume GRH. Let $h\in[q \log^{2+\varepsilon} qX,X]$ for any $\varepsilon>0$. We have
    \begin{equation*}
        \psum_{a\thinspace(q)}\int_{X}^{2X}| \Delta_\pi(u,h,q,a)|^2 \diff u\ll h X .
    \end{equation*}
    For $n\in[X,2X]$ and any co-prime arithmetic progression $a\thinspace(q)$, there are at most
    \begin{equation*}
        \mathcal{O}\bigg(\frac{q^2X}{h} \log^2 X\bigg)
    \end{equation*} intervals $(n-h,n]$ where the PNT does not hold as $X\rightarrow\infty$. 

\end{corollary}

We can significantly improve on Theorem \ref{thm: Ingham 2} if we invoke zero-density estimates.

\begin{theorem}\label{thm: primes in almost all intervals}
    Assume Conditions \ref{hyp: zero-free region} \& \ref{hyp: zero-density} with $\eta_0<\frac{1}{A}$. Let $q\leq Q$ and
    \begin{equation*}
        h\in \bigg[qX^{1-\frac{2}{A}}\exp\bigg(\frac{\eta^{-1}(X^\frac{2}{A})}{A}\log(g(q,X)\log^{2+\varepsilon} qX)\bigg),X\bigg]
    \end{equation*}
    for any $\varepsilon>0$. We have 
    \begin{equation*}
    \begin{split}
        \psum_{a\thinspace(q)}\int_X^{2X} |\Delta_\pi(u,h,q,a)|^2\diff u
        \ll& \frac{h^2\cB(X^2,q)}{X\log^2 X}+\frac{h^2X}{\varphi(q)} \bigg( \frac{qX^{1-\frac{2}{A}}}{h}\bigg)^{A\eta(X^\frac{2}{A})}g(q,X)  .
    \end{split}
    \end{equation*}
    We may omit the term $\cB$ if we additionally restrict $q\leq \eta^{\ell}(X)$ for any $\ell>0$.
\end{theorem}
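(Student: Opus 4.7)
The plan is to adapt the classical mean-square approach of Saffari-Vaughan \cite{SV77} to arithmetic progressions by combining character orthogonality with the truncated explicit formula, then invoking Conditions \ref{hyp: zero-free region} and \ref{hyp: zero-density} to bound the resulting sum over zeros. First, I would use Lemma \ref{lemma: psi to vartheta} to pass from $\Delta_\vartheta$ to its Chebyshev-$\psi$ analog $\Delta_\psi$, the difference contributing a negligible error. Orthogonality of Dirichlet characters (together with Lemma \ref{lemma: int psi to chi}) then converts the sum over primitive residues into
\begin{equation*}
\psum_{a(q)}|\Delta_\psi(u,h,q,a)|^2 = \frac{1}{\varphi(q)}\sum_{\chi(q)}\bigg|\sum_{u<n\le u+h}\Lambda(n)\chi(n)-\delta_\chi h\bigg|^2.
\end{equation*}
Applying the truncated Riemann-von Mangoldt explicit formula at a parameter $T$ (to be chosen) expresses the inner sum as $-\sum_{|\gamma|\le T}((u+h)^\rho-u^\rho)/\rho$ plus an error of size $O(X\log^2(XqT)/T)$ and an exceptional-zero contribution that will eventually supply the term involving $\cB$.

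The main technical task is then to bound
\begin{equation*}
\sum_{\chi(q)}\int_X^{2X}\bigg|\sum_{\rho:|\gamma|\le T}\frac{(u+h)^\rho-u^\rho}{\rho}\bigg|^2\diff u.
\end{equation*}
Expanding the square and applying a Hilbert-type inequality in the spirit of Montgomery-Vaughan to absorb the off-diagonal terms into the diagonal, this reduces to
\begin{equation*}
\sum_{\chi(q)}\sum_{\rho:|\gamma|\le T}X^{2\beta-1}\min\bigl(h^2, X^2/(1+|\gamma|)^2\bigr),
\end{equation*}
the single-zero estimate $|(u+h)^\rho-u^\rho|\ll \min(|\rho|hX^{\beta-1}, X^\beta)$ controlling each summand. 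Splitting the $|\gamma|$-range dyadically at the transition point $T_1=X/h$, I would apply partial summation to Condition \ref{hyp: zero-density} combined with the constraint $\beta\le 1-\eta(|\gamma|)$ from Condition \ref{hyp: zero-free region}. Since $X^{2\sigma}(qT')^{A(1-\sigma)}g(q,T')$ is (for $(qT')^A\le X^2$) increasing in $\sigma$, each dyadic block is dominated by its value at $\sigma=1-\eta(T')$, giving the bound $X^{2(1-\eta(T'))}(qT')^{A\eta(T')}g(q,T')\log X$.

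Choosing the truncation so that $(qT)^A\approx X^2$, the explicit-formula error is absorbed into the main term and the dominant dyadic contribution is the one at $|\gamma|\asymp T_1$; after substitution it simplifies to $|h|^2X(|h|/(qX^{1-2/A}))^{-A\eta(X)}g(q,X)\log^2 qX/\varphi(q)$, matching the claimed bound. The main obstacle will be the careful bookkeeping in this last step: ensuring that the final exponent is $A\eta(X)$ rather than $A\eta(T_1)$ or $A\eta(T)$, verifying that the hypothesised lower bound on $|h|$ is precisely what forces the $T_1$-range to dominate both the low-$|\gamma|$ range and the tail $|\gamma|>T$, and isolating the exceptional zero to produce the $\cB(X^2,q)|h|^2/X$ term (together with confirming the side condition on $q$ under which it may be omitted).
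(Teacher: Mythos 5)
Your outline matches the paper's skeleton at the start and the end (Lemma \ref{lemma: psi to vartheta} to pass from $\vartheta$ to $\psi$, Lemma \ref{lemma: int psi to chi} for orthogonality, the truncated explicit formula of Theorem \ref{thm: chebyshev approx zeroes}, then Riemann--Stieltjes/partial summation against Condition \ref{hyp: zero-density} split at $|\gamma|\asymp X/h$, with the exceptional zero giving $\cB$ and Theorem \ref{thm: Siegel zero bound} allowing its removal for small $q$). The genuine gap is at the mean-square diagonalization. You assert that "a Hilbert-type inequality in the spirit of Montgomery--Vaughan" reduces the expanded square to the clean diagonal $\sum_{\chi}\sum_{\rho}X^{2\beta-1}\min(h^2,X^2/(1+|\gamma|)^2)$. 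This step does not go through as stated. First, the summands $((u+h)^{\rho}-u^{\rho})/\rho$ are not of the form $c_\rho u^{i\gamma}$ with fixed coefficients (the real parts $\beta$ vary and $h$ is fixed rather than proportional to $u$), so the mean-value theorems for Dirichlet polynomials do not apply directly. Second, the Montgomery--Vaughan Hilbert inequality needs lower bounds on the gaps between the ordinates $\gamma$, which are unavailable for zeros of $L(s,\chi)$ (ordinates may cluster or coincide); at best one can only hope to pay the local density $\log q(|\gamma|+2)$, and indeed the paper's Lemma \ref{lemma: int theta chi as sum over zeroes} carries exactly this factor, which your claimed diagonal omits. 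Third, if instead you expand directly and use the elementary bound $\int_X^{2X}u^{i(\gamma_1-\gamma_2)}(\cdots)\diff u\ll X^{\beta_1+\beta_2+1}/(1+|\gamma_1-\gamma_2|)$, the off-diagonal decay is only first order, and $\sum_{\rho_2,|\gamma_2|\le T}(1+|\gamma_1-\gamma_2|)^{-1}\ll\log qT\log T$ costs an extra factor of roughly $\log X$ beyond the local density; the final bound in Theorem \ref{thm: primes in almost all intervals} allows only $\log^2 qX$, so this loss is not affordable.

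The paper resolves precisely this point with the Saffari--Vaughan averaging trick, which your proposal does not supply a substitute for: Lemma \ref{lemma: substitution} first replaces the fixed length $h$ by $\theta u$ at the cost of a constant factor, and then in Lemma \ref{lemma: int theta chi as sum over zeroes} an additional integration over dilations $v\in[1,2]$ produces the factor $(1+|\gamma_1-\gamma_2|)^{-2}$ in the off-diagonal terms, whose sum over $\rho_2$ converges and contributes only $\log q(|\gamma_1|+2)$. This yields $\sum_{\rho}X^{1+2\beta}\min(\theta^2,|\gamma|^{-2})\log q(|\gamma|+2)$ plus the truncation error, after which your subsequent steps (splitting at $|\gamma|\le|\theta|^{-1}$, monotonicity of $\eta$, the choice of $T$, and the exceptional-zero bookkeeping) are essentially the paper's. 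To make your argument complete you must either import this dilation-averaging device or replace it by another mechanism (e.g.\ a Gallagher-type lemma) that achieves second-order off-diagonal decay; the Hilbert-inequality appeal, as written, is the missing idea rather than a proof.
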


Theorem \ref{thm: primes in almost all intervals} induces the following Corollaries.
\begin{corollary}\label{cor: almost all korobov 1}
    Let $0<\varepsilon_1<\varepsilon_2\leq \frac{2}{A}$ and let $q\leq \log^\ell X$ for any $\ell>0$. Let $c>0$ denote a constant satisfying the Vinogradov-Korobov zero-free region \eqref{eq: Vinogradov-Korobov region}. Assume Condition \ref{hyp: zero-density} with
    \begin{equation*}
        g(q,T)\ll  \exp\bigg(\frac{\varepsilon_3 A^2 c}{2}\bigg(\frac{\log X^\frac{2}{A}}{\log\log X^\frac{2}{A}}\bigg)^\frac{1}{3} \bigg),
    \end{equation*}
    for any $0<\varepsilon_3<\varepsilon_2-\varepsilon_1$. For any $h\in[X^{1-\frac{2}{A}+\varepsilon_2},X]$ we have
    \begin{equation*}
        \psum_{a\thinspace(q)}\int_{X}^{2X}| \Delta_\pi(u,h,q,a)|^2 \diff u\ll h^2X \exp\bigg(-\frac{\varepsilon_1 A^2 c}{2}\bigg(\frac{\log X^\frac{2}{A}}{\log\log X^\frac{2}{A}}\bigg)^\frac{1}{3} \bigg) .
    \end{equation*}
    For $n\in[X,2X]$ and any co-prime arithmetic progression $a\thinspace(q)$, there are at most
    \begin{equation*}
        \mathcal{O}\bigg(X \exp\bigg(-\frac{\varepsilon_1 A^2 c}{2}\bigg(\frac{\log X^\frac{2}{A}}{\log\log X^\frac{2}{A}}\bigg)^\frac{1}{3} \bigg)\bigg)
    \end{equation*} intervals $(n-h,n]$ where the PNT does not hold as $X\rightarrow\infty$. 
\end{corollary}
\begin{corollary}\label{cor: almost all korobov 2}
    Let $0<\varepsilon_1<\varepsilon_2< \frac{1}{3}$ and let $q\leq \log^\ell X$ for any $\ell>0$. Assume Condition \ref{hyp: zero-density} with
    \begin{equation*}
        g(q,T)\ll  \exp(\log^{\varepsilon_3}X),
    \end{equation*}
    for any $0<\varepsilon_3<\varepsilon_1$. For any $h\in[X^{1-\frac{2}{A}}\exp(\log^{\frac{2}{3}+\varepsilon_2}X),X]$ we have
    \begin{equation*}
        \psum_{a\thinspace(q)}\int_{X}^{2X}| \Delta_\pi(u,h,q,a)|^2 \diff u\ll h^2X \exp(-\log^{\varepsilon_1}X ) .
    \end{equation*}
    For $n\in[X,2X]$ and any co-prime arithmetic progression $a\thinspace(q)$, there are at most
    \begin{equation*}
        \mathcal{O}(X \exp(-\log^{\varepsilon_1}X ) )
    \end{equation*} intervals $(n-h,n]$ where the PNT does not hold as $X\rightarrow\infty$. 
\end{corollary}

Corollaries \ref{cor: all korobov 1} \& \ref{cor: all korobov 2} are again due to the (generalized) Vinogradov-Korobov zero-free region (see \cite{Vin58}, \cite{Kor58}, \cite{Kha23}) with the corresponding constant $c>0$. We may again take $A=\frac{7}{3}$ due to Chen \cite{Che25}. We may also take $A=\frac{30}{13}$ for $q=1$ due to Guth-Maynard \cite{GM24} while we expect a further generalization of their result.

\begin{corollary}\label{cor: main constant 2}
    Assume Conditions \ref{hyp: zero-free region} \& \ref{hyp: zero-density} with $\frac{1}{A}>\eta(T)=\eta_0>0$ and $g(q,T)=\log^B qT$ for some $B\geq 0$. We also assume the absence of exceptional zeroes. Let $C=\frac{B+2}{A\eta_0}+\varepsilon$ for any $\varepsilon>0$ and let
    \begin{equation*}
        h\in[qX^{1-\frac{2}{A}}\log^C X,X].
    \end{equation*}
    We have
    \begin{equation*}
        \psum_{a\thinspace(q)}\int_{X}^{2X}| \Delta_\pi(u,h,q,a)|^2 \diff u\ll \frac{h^2X}{q(\log qX)^{2+\varepsilon A\eta_0}}.
    \end{equation*}
    For $n\in[X,2X]$ and any co-prime arithmetic progression $a\thinspace(q)$, there are at most
    \begin{equation*}
        \mathcal{O}\bigg(\frac{qX}{\log^{\varepsilon A\eta_0}qX}\bigg)
    \end{equation*} intervals $(n-h,n]$ where the PNT does not hold as $X\rightarrow\infty$. 
\end{corollary}
The Conditions for Corollary \ref{cor: main constant 2} describe a weak case of GRH dominated by a zero-density estimate, such that $\frac{1}{2}\leq 1-\frac{1}{A}<1-\eta_0$.

\section{Preliminaries}
\subsection{Twisted prime counting functions}\label{sec: Twisted Chebyshev functions}
Let $q\in \N$, let $(a,q)=1$ and let $\chi$ be a Dirichlet character modulo $q$. Also let $p$ always denote a prime, let $\Lambda$ denote the von Mangoldt function, let $\mu$ denote the M\"obius function and let $\varphi$ denote the Euler totient function. We denote the error terms of the prime counting functions in the interval $(x-y,x]$ and
\begin{enumerate}[label=(\roman*)]
    \item modified by a Dirichlet character
\begin{equation*}\label{def: Chebyshev chi}
    \begin{split}
        \Delta_\pi(x,y,\chi):=&\sum_{x-y< p\leq x}\chi(p) -\mathds{1}_{\chi_0}(\chi) \text{li}(x,y),\\
        \Delta_\vartheta(x,y,\chi):=&\sum_{x-y< p\leq x}\chi(p)\log p -\mathds{1}_{\chi_0}(\chi) y,\\
        \Delta_\psi(x,y,\chi):=&\sum_{x-y<n\leq x}\chi(n)\Lambda(n)-\mathds{1}_{\chi_0}(\chi) y,\\
    \end{split}
\end{equation*}
where $\mathds{1}_{\chi_0}$ is the indicator function for the principal character $\chi_0$ modulo $q$.
    \item in an arithmetic progression
\begin{equation*}\label{def: Chebyshev arith prog}
\begin{split}
    \Delta_\pi(x,y,q,a):=&\sum_{\substack{x-y<p\leq x\\p\equiv a\thinspace (q)}}1-\frac{\text{li}(x,y)}{\varphi(q)},\\
    \Delta_\vartheta(x,y,q,a):=&\sum_{\substack{x-y<p\leq x\\p\equiv a\thinspace (q)}}\log p-\frac{y}{\varphi(q)},\\
    \Delta_\psi(x,y,q,a):=&\sum_{\substack{x-y<n\leq x\\n\equiv a\thinspace (q)}}\Lambda(n)-\frac{y}{\varphi(q)}.
\end{split}
\end{equation*}
\item \label{it: additive} or modified by an additive character
\begin{equation*}\label{def: Chebyshev additive}
\begin{split}
    \Delta_\pi(x,y,a/q):=&\sum_{x-y<p\leq x}\e(pa/q)-\frac{\mu(q)}{\varphi(q)}\text{li}(x,y),\\
    \Delta_\vartheta(x,y,a/q):=&\sum_{x-y<p\leq x}\e(pa/q)\log p-\frac{\mu(q)}{\varphi(q)}y,\\
    \Delta_\psi(x,y,a/q):=&\sum_{x-y<n\leq x}\e(na/q)\Lambda(n)-\frac{\mu(q)}{\varphi(q)}y,
\end{split}
\end{equation*}
where $\e(\alpha):=\exp(2\pi i\alpha)$.
\end{enumerate}
The disambiguation between \ref{def: Chebyshev chi} and \ref{def: Chebyshev additive} may be discerned by the third input, which are either a Dirichlet character or a real number. Notably, by taking $q=1$, we obtain the identities
\begin{equation*}
\begin{split}
    \Delta_\pi(x,y,\chi_0)=\Delta_\pi(x,y,1,1)=\Delta_\pi(x,y,1),\\
    \Delta_\vartheta(x,y,\chi_0)=\Delta_\vartheta(x,y,1,1)=\Delta_\vartheta(x,y,1),\\
    \Delta_\psi(x,y,\chi_0)=\Delta_\psi(x,y,1,1)=\Delta_\psi(x,y,1),
\end{split}
\end{equation*}
which are equivalent to the standard (non-twisted) prime counting functions in short intervals. 

All these twisted prime counting functions are closely related. In fact, they can all be written as functions of  $\Delta_\psi(x,y,\chi)$ (thereby incurring small error terms). The following Lemma establishes a connection between the cases $\vartheta$ and $\psi$.
\begin{lemma}\label{lemma: psi to vartheta}
    Let $(a,q)=1$, let $\chi$ be a Dirichlet character modulo $q$ and let $0< y\leq x$. We have 
    \begin{equation*}
    \begin{split}
        \Delta_\vartheta(x,y,\chi)=&\Delta_\psi(x,y,\chi) +\cO(\sqrt{y}\log x),\\
        \Delta_\vartheta(x,y,q,a)=&\Delta_\psi(x,y,q,a) +\cO(\sqrt{y}\log x),\\
        \Delta_\vartheta(x,y,a/q)=&\Delta_\psi(x,y,a/q) +\cO(\sqrt{y}\log x).\\
    \end{split}
    \end{equation*}
\end{lemma}
\begin{proof}
    We have 
    \begin{equation*}
    \begin{split}
        |\Delta_\psi(x,y,\chi)-\Delta_\vartheta(x,y,\chi)|\leq &|\Delta_\psi(x,y,1)-\Delta_\vartheta(x,y,1)|\\
        \ll& \log x\sum_{2\leq k\leq \log x} |(x+y)^{1/k}- x^{1/k}|\ll \sqrt{y}\log x.
    \end{split}
    \end{equation*}
    The cases $\Delta_\vartheta(x,y,q,a)$ and $\Delta_\vartheta(x,y,a/q)$ can be shown analogously.
\end{proof}
The next Lemma establishes a relationship between the weighted and unweighted prime counting functions $\vartheta$ and $\pi$.
\begin{lemma}
    Let $(a,q)=1$, let $\chi$ be a Dirichlet character modulo $q$ and let $\varepsilon>0$. For $0<y\leq x-x^\varepsilon$ we have
    \begin{equation}\label{eq: weighted to unweighted}
    \begin{split}
        \Delta_\pi(x,y,\chi)\ll& \frac{1}{\log x}\max_{0<t\leq y}|\Delta_\vartheta(x-y+t,t,\chi)|\\
        \Delta_\pi(x,y,q,a)\ll& \frac{1}{\log x}\max_{0<t\leq y}|\Delta_\vartheta(x-y+t,t,q,a)|\\
        \Delta_\pi(x,y,a/q)\ll& \frac{1}{\log x}\max_{0<t\leq y}|\Delta_\vartheta(x-y+t,t,a/q)|.
    \end{split}
    \end{equation}
    We may extend the range for the inequalities  \eqref{eq: weighted to unweighted} to $0<y\leq x$ by incurring an additional error term of $\cO(x^\varepsilon)$.
\end{lemma}
\begin{proof}
    We prove the Lemma for the case $\Delta_\pi(x,y,q,a)$. The proofs for the cases $\Delta_\pi(x,y,\chi)$ and $\Delta_\pi(x,y,a/q)$ work analogously. By Riemann-Stieltjes integration \mbox{(see \cite[Theorem A.2]{MV06})} we have
    \begin{equation}\label{eq: weighted to unweighted proof}
    \begin{split}
        \Delta_\pi(x,y,q,a)=&\int_{0}^y\frac{1}{\log (x-y+t)}\diff \bigg(\sum_{\substack{x-y<p\leq x-y+t\\p\equiv a\thinspace(q)}}\log p\bigg) -\frac{1}{\varphi(q)}\int_{0}^y\frac{1}{\log x-y+t}\diff t\\
        =&\int_{0}^y\frac{1}{\log (x-y+t)}\diff \Delta_\vartheta(x-y+t,t,q,a)\\
        =&\frac{\Delta_\vartheta(x,y,q,a)}{\log x}- \int_{0}^y\frac{\Delta_\vartheta(x-y+t,t,q,a)}{(x-y+t)\log^2(x-y+t)}\diff t .
    \end{split}
    \end{equation}
    For $0<y\leq x-x^\varepsilon$, \eqref{eq: weighted to unweighted} follows directly from \eqref{eq: weighted to unweighted proof}. For $x-x^\varepsilon<y\leq x$ we have
    \begin{equation*}
    \begin{split}
        \Delta_\pi(x,y,q,a)=&\frac{\Delta_\vartheta(x,y,q,a)}{\log x}- \int_{y-x+x^\varepsilon}^{y}\frac{\Delta_\vartheta(x-y+t,t,q,a)}{(x-y+t)\log^2(x-y+t)}\diff t\\
        &- \int_{0}^{y-x+x^\varepsilon}\frac{\Delta_\vartheta(x-y+t,t,q,a)}{(x-y+t)\log^2(x-y+t)}\diff t\\
        \ll&\frac{1}{\log x^\varepsilon}\max_{0<t\leq y}|\Delta_\vartheta(x-y+t,t,q,a)|+ \int_0^{x^\varepsilon}\frac{|\Delta_\vartheta(x-y+t,t,q,a)|}{(x-y+t)\log^2(x-y+t)}\diff t\\
        \ll&\frac{1}{\log x}\max_{0<t\leq y}|\Delta_\vartheta(x-y+t,t,q,a)|+ x^\varepsilon.\\
    \end{split}
    \end{equation*}
\end{proof}
\begin{corollary}
    Let $(a,q)=1$, let $\chi$ be a Dirichlet character modulo $q$ and let $\varepsilon>0$. For $0<y\leq x$ we have
    \begin{equation*}
    \begin{split}
        \Delta_\vartheta(x,y,\chi)\ll& \log x\max_{0<t\leq y}|\Delta_\pi(x-y+t,t,\chi)|\\
        \Delta_\vartheta(x,y,q,a)\ll& \log x\max_{0<t\leq y}|\Delta_\pi(x-y+t,t,q,a)|\\
        \Delta_\vartheta(x,y,a/q)\ll& \log x\max_{0<t\leq y}|\Delta_\pi(x-y+t,t,a/q)|.
    \end{split}
    \end{equation*}
\end{corollary}
We want to infer properties of prime counting functions with different twists from one another, instrumental therein is the well-known orthogonality relation
    \begin{equation}\label{eq: orthogonality relation dirichlet characters}
        \frac{1}{\varphi(q)}\sum_{\chi \thinspace(q)}\chi(b)\overline{\chi(a)}=\begin{cases}
            1 & \text{if }b\equiv a \thinspace(\text{mod } q)\\
            0 & \text{otherwise,}
        \end{cases}
    \end{equation}
where the sum runs over all Dirichlet characters $\chi$ modulo $q$ (see e.g. \cite[Equation 4.27]{MV06}).

\begin{lemma}\label{lemma: S as lin comb of psi}
    Let $(a,q)=1$, let $0<y\leq x$ and let $\kappa\in\{\pi,\vartheta,\psi\}$. We have
    \begin{equation}\label{eq: psi arith as lin comb of Dirichlet}
        \Delta_\kappa(x,y,q,a)=\frac{1}{\varphi(q)}\sum_{\chi\thinspace (q)}\overline{\chi(a)}\Delta_\kappa(x,y,\chi),
    \end{equation}
    and
    \begin{equation}\label{eq: S linear combination of chi}
        \Delta_\kappa(x,y,a/q)= \frac{1}{\varphi(q)} \sum_{\chi \thinspace(q)}\chi(a)\tau(\overline{\chi})\Delta_\kappa(x,y,\chi) + \mathcal{O}(\log q\log x),
    \end{equation}
    where $\tau(\chi)$ denotes the Gauss sum
    \begin{equation*}
        \tau(\chi):=\sum_{b \thinspace(q)}\chi(b)\e(b/q).
    \end{equation*}
\end{lemma}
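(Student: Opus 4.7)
The plan is to prove both identities by dualizing between Dirichlet characters and (residue classes / additive characters), using the orthogonality relation \eqref{eq: orthogonality relation dirichlet characters} for the first identity and a Fourier-type inversion of $\e(\cdot/q)$ in multiplicative characters for the second. The cases $\kappa\in\{\vartheta,\psi\}$ are handled uniformly since they differ only in the summand ($\log p$ versus $\Lambda(n)$); I describe the $\psi$-case and the $\vartheta$-case is analogous.

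For \eqref{eq: psi arith as lin comb of Dirichlet}, I would substitute the definition of $\Delta_\kappa(x,y,\chi)$ into the right-hand side and interchange the sums over $n$ and $\chi$. The inner sum $\frac{1}{\varphi(q)}\sum_\chi \overline{\chi(a)}\chi(n)$ collapses to the indicator of $n\equiv a\thinspace(q)$ by \eqref{eq: orthogonality relation dirichlet characters}; the usual coprimality requirement $(n,q)=1$ on the orthogonality relation is automatic because $\chi(n)=0$ when $(n,q)>1$, and because $(a,q)=1$ forces $(n,q)=1$ whenever $n\equiv a\thinspace (q)$. The main-term part contracts via $\sum_\chi \overline{\chi(a)}\delta_\chi=\overline{\chi_0(a)}=1$ to give exactly the $-y/\varphi(q)$ normalization built into $\Delta_\kappa(x,y,q,a)$, which finishes the identity without any error term.

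For \eqref{eq: S linear combination of chi}, I would first establish the pointwise identity
\begin{equation*}
    \e(na/q)=\frac{1}{\varphi(q)}\sum_{\chi\thinspace(q)}\chi(a)\thinspace\tau(\overline{\chi})\thinspace\chi(n)\qquad\text{for }(n,q)=1,
\end{equation*}
by writing $\tau(\overline{\chi})=\sum_b \overline{\chi(b)}\e(b/q)$, substituting $b\mapsto n^{-1}b$ in the inner sum (valid since $(n,q)=1$), and applying orthogonality to the resulting $\chi$-sum. Plugging this into $\sum_{x<n\leq x+y}\e(na/q)\Lambda(n)$, discarding the $(n,q)>1$ terms (where the $\chi$-expansion vanishes anyway), and interchanging sums produces the claimed $\chi$-expansion; the principal character contributes $\tau(\chi_0)=c_q(1)=\mu(q)$ to the main term, yielding the normalization $-\mu(q)y/\varphi(q)$ required by $\Delta_\kappa(x,y,a/q)$. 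The terms discarded above are prime powers $n=p^k$ with $p\mid q$ lying in $(x,x+|y|]$; since $|y|\leq x$, at most one such power exists per prime divisor of $q$ (consecutive powers differ by a factor of at least $p\geq 2$), so their total contribution is bounded by $\sum_{p\mid q}\log p\leq\log q$, which sits comfortably inside the $\cO(\log q\log x)$ error allowed by the statement. The only subtleties are the bookkeeping of these exceptional prime powers and the Ramanujan-sum evaluation $\tau(\chi_0)=\mu(q)$; neither poses a genuine obstacle.
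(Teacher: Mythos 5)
Your proposal is correct and follows essentially the same route as the paper: orthogonality of Dirichlet characters gives \eqref{eq: psi arith as lin comb of Dirichlet} exactly, and \eqref{eq: S linear combination of chi} comes from dropping the terms with $(n,q)>1$ and expanding $\e(na/q)$ in Dirichlet characters via Gauss sums, with $\tau(\chi_0)=\mu(q)$ matching the main-term normalization. Your explicit bound on the discarded prime-power terms is a slightly more careful version of the paper's ``small error term'' remark, but the argument is the same.
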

\begin{proof}
    It suffices to show $\kappa=\psi$, the cases $\kappa\in\{\pi,\vartheta\}$ work analogously. Equation \eqref{eq: psi arith as lin comb of Dirichlet} follows immediately from Equation \eqref{eq: orthogonality relation dirichlet characters}. We simply multiply both sides by $\Lambda(n)-1$, sum over all $n\in(x-y,x]$, and exchange the order of summation. For Equation \eqref{eq: S linear combination of chi} we may omit the summands which are not coprime to $q$, thereby incurring a small error term.
    \begin{equation}\label{eq: coprime generating function}
        \Delta_\psi(x,y,a/q)= \sum_{\substack{x<n\leq x+y\\(n,q)=1}} \Lambda(n)\thinspace\e(na/q) -\frac{\mu(q)}{\varphi(q)}y + \mathcal{O}(\log q\log x).
    \end{equation}
    We may write $\e(na/q)$ as a sum over all additive characters modulo $q$ weighted by the indicator function \eqref{eq: orthogonality relation dirichlet characters}. We have
    \begin{equation}\label{eq: additive to Dirichlet}
        \e(an/q)=\frac{1}{\varphi(q)}\sum_{b \thinspace(q)} \e(b/q) \sum_{\chi \thinspace(q)}\chi(b)\overline{\chi(an)} = \frac{1}{\varphi(q)}\sum_{\chi \thinspace(q)}\overline{\chi(an)}\tau(\chi).
    \end{equation}
    We conclude the proof by inserting \eqref{eq: additive to Dirichlet} into \eqref{eq: coprime generating function}, exchanging the order of summation and noting that $\tau(\chi_0)=\mu(q)$ (see e.g. \cite[Theorem 4.1]{MV06}).
\end{proof}

Furthermore we are interested in the square error terms averaged over $(a,q)=1$.

\begin{lemma}\label{lemma: int psi to chi}
    Let $(a,q)=1$, let $0<y\leq x$ and let $\kappa\in\{\pi,\vartheta,\psi\}$. We have
    \begin{equation}\label{eq: int psi q a to chi}
    \begin{split}
        \psum_{a\thinspace (q)}|\Delta_\kappa(x,y,q,a)|^2  = \frac{1}{\varphi(q)}\sum_{\chi\thinspace(q)}|\Delta_\kappa(x,y,\chi)|^2 ,
    \end{split}
    \end{equation}
    and
    \begin{equation}\label{eq: int psi additive to chi}
    \begin{split}
        \psum_{a\thinspace (q)}|\Delta_\kappa(x,y,a/q)|^2  = \frac{1}{\varphi(q)}\sum_{\chi\thinspace (q)}|\tau(\chi)|^2 |\Delta_\kappa(x,y,\chi)|^2  +\mathcal{O}(q\log^2 q \log^2 x).
    \end{split}
    \end{equation}
\end{lemma}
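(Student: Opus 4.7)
The strategy is to reduce both identities to the orthogonality relation \eqref{eq: orthogonality relation dirichlet characters} by first expanding, via Lemma \ref{lemma: S as lin comb of psi}, the quantities $\Delta_\kappa(x,y,q,a)$ and $\Delta_\kappa(x,y,a/q)$ as linear combinations of the Dirichlet-twisted Chebyshev errors $\Delta_\kappa(x,y,\chi)$, then squaring and collapsing the resulting double sum over characters onto the diagonal.

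For \eqref{eq: int psi q a to chi}, I would insert \eqref{eq: psi arith as lin comb of Dirichlet}, distribute the product, and exchange the order of summation to obtain
\begin{equation*}
\psum_{a\thinspace (q)}|\Delta_\kappa(x,y,q,a)|^2 = \frac{1}{\varphi(q)^2}\sum_{\chi_1,\chi_2\thinspace (q)}\Delta_\kappa(x,y,\chi_1)\overline{\Delta_\kappa(x,y,\chi_2)}\psum_{a\thinspace (q)}\overline{\chi_1(a)}\chi_2(a).
\end{equation*}
By \eqref{eq: orthogonality relation dirichlet characters} the inner character sum equals $\varphi(q)$ when $\chi_1=\chi_2$ and vanishes otherwise, so the identity follows exactly.

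For \eqref{eq: int psi additive to chi}, I would apply \eqref{eq: S linear combination of chi} to write $\Delta_\kappa(x,y,a/q) = S(a) + E(a)$ with
\begin{equation*}
S(a) := \frac{1}{\varphi(q)}\sum_{\chi\thinspace (q)}\chi(a)\tau(\overline{\chi})\Delta_\kappa(x,y,\chi) \quad\text{and}\quad |E(a)| \ll \log q \log x,
\end{equation*}
expand $|S(a)+E(a)|^2 = |S(a)|^2 + 2\Re(\overline{S(a)}E(a)) + |E(a)|^2$, and sum over reduced residues. The same diagonal computation (now also using $|\tau(\overline{\chi})| = |\tau(\chi)|$) yields $\psum_{a\thinspace (q)}|S(a)|^2 = \varphi(q)^{-1}\sum_\chi |\tau(\chi)|^2 |\Delta_\kappa(x,y,\chi)|^2$, which is the desired main term, while the pure-error contribution $\psum_{a\thinspace (q)}|E(a)|^2 \ll \varphi(q)\log^2 q\log^2 x \le q\log^2 q\log^2 x$ is absorbed directly.

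The main technical point will be controlling the cross term $\psum_{a\thinspace (q)}\overline{S(a)}E(a)$. Recalling from the proof of Lemma \ref{lemma: S as lin comb of psi} that $E(a) = \sum_{\substack{x<n\le x+y\\(n,q)>1}}\Lambda(n)\e(na/q)$, and that the support $\cS$ of this sum consists only of prime powers of primes dividing $q$ (so that $\sum_{n\in\cS}\Lambda(n)\ll \log q$), I would swap summations and expand $\overline{S(a)}$ in characters, reducing the inner sum to twisted Gauss sums $\psum_{a\thinspace (q)}\overline{\chi(a)}\e(na/q)$. For $\chi$ primitive modulo $q$ these equal $\chi(n)\tau(\overline{\chi})$ and hence vanish whenever $(n,q)>1$, so only imprimitive $\chi$ contribute; combining the trivial bound $\le q$ on these surviving Gauss sums with $|\tau(\overline\chi)|\le q^{1/2}$ and the sparsity of $\cS$ should confine the cross term within the stated $O(q\log^2 q\log^2 x)$. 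Finally, the case $\kappa=\vartheta$ follows from the case $\kappa=\psi$ by Lemma \ref{lemma: psi to vartheta} together with a routine Cauchy--Schwarz treatment of the resulting mixed terms.
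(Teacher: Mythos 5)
For \eqref{eq: int psi q a to chi} your argument is precisely the paper's: insert \eqref{eq: psi arith as lin comb of Dirichlet}, expand the square, and collapse the double character sum to the diagonal by orthogonality; this part is correct, as are your diagonal computation of $\psum_{a\thinspace(q)}|S(a)|^2$ and the bound $\psum_{a\thinspace(q)}|E(a)|^2\ll q\log^2q\log^2x$, which again coincide with the paper's treatment of \eqref{eq: int psi additive to chi}.

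The gap is in your treatment of the cross term, which you correctly single out as the main technical point but do not actually control. After swapping summations the cross term is
$\frac{1}{\varphi(q)}\sum_{\chi\thinspace(q)}\overline{\tau(\overline{\chi})}\,\overline{\Delta_\kappa(x,y,\chi)}\sum_{n\in\cS}\Lambda(n)\psum_{a\thinspace(q)}\overline{\chi(a)}\e(na/q)$,
and your proposed bound uses only the sizes of the twisted Gauss sums, of $\tau(\overline\chi)$ and of $\cS$, silently discarding the factor $\Delta_\kappa(x,y,\chi)$ attached to each surviving character. The survivors are exactly the imprimitive $\chi$, and for $q>1$ these include $\chi_0$, whose twisted sum is the Ramanujan sum $c_q(n)$ (large precisely when $(n,q)>1$) and whose weight $\Delta_\kappa(x,y,\chi_0)$ is of the order of the error in the PNT for the interval $(x,x+y]$ --- nothing like a power of $\log$. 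Concretely, take $q=p$ an odd prime with a single power $p^k\in(x,x+y]$: then $E(a)=\log p$ for every reduced residue $a$, and the cross term equals $2\mu(p)\log p\,\Re\,\Delta_\kappa(x,y,\chi_0)$, which no known bound (even under GRH, for general $|y|$) places below $q\log^2q\log^2x$. So the cross term cannot be ``confined within the stated error''; what one can prove is \eqref{eq: int psi additive to chi} with ``$=$'' weakened to ``$\ll$'' (using $|S+E|^2\ll|S|^2+|E|^2$), or with an extra error of the shape $\log q\thinspace\log x\max_{\chi\ \mathrm{imprimitive}}|\Delta_\kappa(x,y,\chi)|$. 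You should be aware that the paper's own proof has the same defect: its first display already replaces $|S(a)+E(a)|^2$ by $|S(a)|^2+\cO(q\log^2q\log^2x)$, dropping these cross terms without comment; since the lemma is only ever used as an upper bound, nothing downstream is affected, but your argument as written does not close this hole either.

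A smaller point: your closing reduction of $\kappa=\vartheta$ to $\kappa=\psi$ via Lemma \ref{lemma: psi to vartheta} and Cauchy--Schwarz does not work for these statements, since it introduces errors of size $\sqrt{|y|}\log x$ per character, hence contributions of size about $\varphi(q)|y|\log^2x$ after squaring and summing, which are not absorbed by the right-hand sides (and \eqref{eq: int psi q a to chi} is an exact identity). It is also unnecessary: Lemma \ref{lemma: S as lin comb of psi} is stated for both $\kappa\in\{\vartheta,\psi\}$, so your argument applies verbatim to $\vartheta$, which is how the paper handles it.
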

\begin{proof}
    By Lemma \ref{lemma: S as lin comb of psi} we have
    \begin{equation*}
        \begin{split}
            \psum_{a\thinspace (q)} |\Delta_\kappa(x,y,a/q)|^2  =& \frac{1}{\varphi^{2}(q)}\psum_{a\thinspace (q)}\bigg| \sum_{\chi \thinspace(q)}\chi(a)\tau(\overline{\chi})\Delta_\kappa(x,y,\chi)\bigg|^2  +\mathcal{O}\bigg(q\log^2 q \log^2 x\bigg).
        \end{split}
    \end{equation*}
    By exchanging the order of integration and summation the main term becomes
    \begin{equation}\label{eq: int psi additive to chi main term}
        \begin{split}
            \frac{1}{\varphi^{2}(q)}\sum_{\chi_1,\chi_2 \thinspace(q)}\tau(\overline{\chi_1})\overline{\tau(\overline{\chi_2})}\Delta_\kappa(x,y,\chi_1)\overline{\Delta_\kappa(x,y,\chi_2)}\psum_{a\thinspace (q)}\chi_1\overline{\chi_2}(a).
        \end{split}
    \end{equation}
    We conclude the proof of \eqref{eq: int psi additive to chi} by noting that the sum over $a\thinspace (q)$ in \eqref{eq: int psi additive to chi main term} equals $\varphi(q)$ if $\chi_1=\chi_2$ and equals zero otherwise (see \cite[Corollary 4.5]{MV06}). The proof for \eqref{eq: int psi q a to chi} works analogously by omitting the Gauss sums $\tau$.
\end{proof}

\subsection{Zeroes of Dirichlet $L$-functions}
In the previous subsection we reduced various error terms of twisted prime counting functions in short intervals to functions of $\Delta_\psi(x,y,\chi)$. We may therefore infer information about these error terms via the truncated Riemann-von Mangoldt explicit formula.

\begin{theorem}\label{thm: chebyshev approx zeroes}
    Let $\chi$ be a Dirichlet character modulo $q$, let $0< y\leq x$ and let $2\leq T\leq x$. We have
    \begin{equation*}
        \Delta_\psi(x,y,\chi)=  -\sum_{\substack{\rho\thinspace(\chi)\\|\gamma|\leq T}} \frac{x^\rho -(x-y)^\rho}{\rho}  +\mathcal{O}\bigg( \frac{x}{T}\log^2 qx\bigg),
    \end{equation*}
    where the sum runs over all non-trivial zeroes $\rho=\beta+i\gamma$ of the Dirichlet $L$-function $L(s,\chi)$ with real part $\beta>0$ and imaginary part $|\gamma|\leq T$. 
\end{theorem}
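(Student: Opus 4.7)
The plan is to derive the statement by applying the standard truncated Riemann--von Mangoldt explicit formula for $\psi(\cdot,\chi)$ at the two endpoints of the interval and subtracting.

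First I would invoke the classical truncated explicit formula (as proved e.g. in Davenport's \emph{Multiplicative Number Theory}, Ch.~19, or in \cite{MV06}, Ch.~12): for $2\le T\le x$ and any Dirichlet character $\chi$ modulo $q$,
$$\psi(x,\chi) \;=\; \delta_\chi\, x \;-\; \sum_{\substack{\rho\,(\chi)\\|\gamma|\le T}} \frac{x^{\rho}}{\rho} \;+\; F(x,\chi) \;+\; O\!\left(\tfrac{x}{T}\log^{2}(qxT)\right),$$
where $F(x,\chi)$ collects the contribution of the trivial zeroes together with the constant $-(L'/L)(0,\chi)$ (plus an $O(\log x)$ term when $\chi$ is imprimitive). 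This identity is obtained by contour-shifting Perron's formula for $-(L'/L)(s,\chi)$ from $\mathrm{Re}\,s=1+1/\log x$ to a vertical line just to the left of the critical strip, after a bounded adjustment of $T$ to stay away from zeroes; the error bound uses the unit-strip zero-count $N(T+1,\chi)-N(T,\chi)\ll \log(q(|T|+2))$ and the partial-fraction expansion of $L'/L$.

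Next I would apply this formula at both $x+y$ and $x$ and subtract. This gives
$$\psi(x+y,\chi)-\psi(x,\chi) \;=\; \delta_\chi\, y \;-\; \sum_{|\gamma|\le T}\frac{(x+y)^{\rho}-x^{\rho}}{\rho} \;+\; \bigl(F(x+y,\chi)-F(x,\chi)\bigr) \;+\; O\!\left(\tfrac{x}{T}\log^{2}(qxT)\right).$$
The constant term $-(L'/L)(0,\chi)$ cancels exactly, and the trivial-zero difference telescopes to a quantity of size $O(|y|/x)=O(1)$ since $1\le|y|\le x$. Because $T\le x$ we have $\log^{2}(qxT)\ll\log^{2} qx$, so the error collapses to the claimed $O((x/T)\log^{2} qx)$. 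Recalling that $\Delta_\psi(x,y,\chi)=\psi(x+y,\chi)-\psi(x,\chi)-\delta_\chi y$ yields the stated identity.

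The only delicate point is securing the truncated explicit formula with a clean $O((x/T)\log^{2} qx)$ error: this rests on bounding $-(L'/L)(\sigma+iT,\chi)$ uniformly on a vertical segment crossing the critical strip, which in turn needs only the functional equation, the Hadamard product, and the unit-strip zero-counting estimate. Since these are all classical and already encoded in the standard references, the whole proof amounts to quoting the explicit formula and performing the subtraction outlined above; no new input is required.
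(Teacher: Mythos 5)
Your proposal is correct and matches the paper's approach: the paper's proof is simply a citation of the truncated explicit formula in \cite[Theorem 12.12 and Equation 12.12]{MV06}, which is exactly the formula you quote and then subtract at the endpoints $x+y$ and $x$. The cancellation of the constant and trivial-zero terms and the absorption of $\log^2(qxT)$ into $\log^2 qx$ for $T\leq x$ are the same routine observations implicit in the paper's citation.
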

\begin{proof}
    See \cite[Theorem 12.12]{MV06} and \cite[Equation 12.12]{MV06}.
\end{proof}
Notably, Cully-Hugill and Johnston \cite{CHJ25} have shown an improvement to the error term in Theorem \ref{thm: chebyshev approx zeroes} on average for the case $q=1$. This would lead to another saving of a logarithmic power for the main results about primes in all arithmetic progressions. Evidently by Theorem \ref{thm: chebyshev approx zeroes} we are able to bound the error terms of our various twisted prime counting functions, if we know the (approximate) locations of the zeroes of Dirichlet $L$-functions. Additionally to the partially proven Conditions \ref{hyp: zero-free region} \& \ref{hyp: zero-density}, we will be using the following two well-known results. The first result concerns the vertical distribution of zeroes in the critical strip.
\begin{theorem}\label{thm: vertical zeroes}
    Let $T\geq 4$. We have
    \begin{equation}
        \cN(0,T,\chi) = \frac{T}{\pi}\log \frac{qT}{2\pi} -\frac{T}{\pi}+\mathcal{O}(\log qT).
    \end{equation}
\end{theorem}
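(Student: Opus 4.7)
The plan is to establish this via the argument principle applied to the completed $L$-function, following the classical Riemann--von Mangoldt derivation in the Dirichlet setting. First I would reduce to primitive characters: if $\chi$ is induced by the primitive character $\chi^*$ modulo $q^*\mid q$, then $L(s,\chi) = L(s,\chi^*)\prod_{p\mid q/q^*}(1-\chi^*(p)p^{-s})$, and the zeros of the trailing finite Euler product all lie on $\Re s = 0$, contributing nothing to $N(0,T,\chi)$. Replacing $\log q$ by $\log q^*$ is absorbed into the $O(\log qT)$ error.

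For primitive $\chi$ modulo $q$, set $\mathfrak{a} = (1-\chi(-1))/2$ and consider
$$\xi(s,\chi) := \left(\frac{q}{\pi}\right)^{(s+\mathfrak{a})/2}\Gamma\!\left(\frac{s+\mathfrak{a}}{2}\right)L(s,\chi),$$
an entire function of order $1$ whose non-trivial zeros coincide with those of $L(s,\chi)$, satisfying the functional equation $\xi(s,\chi) = W(\chi)\xi(1-s,\overline{\chi})$ with $|W(\chi)| = 1$. I would apply the argument principle to the rectangle $R$ with vertices $-1\pm iT$, $2\pm iT$, yielding $2\pi N(0,T,\chi) = \Delta_{\partial R}\arg\xi(s,\chi)$ (any zero of $L$ at $\beta = 0$ would be automatically excluded, and the trivial zeros are outside $R$). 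Combining the functional equation with Schwarz reflection identifies the change of argument on the left half of $\partial R$ with that on the reflected right half, so the computation reduces to twice the change along the contour $C$ from $1/2-iT$ up to $2-iT$, across to $2+iT$, and back to $1/2+iT$.

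I would then split the change of $\arg\xi$ along $C$ into its three factors. The prefactor $(q/\pi)^{(s+\mathfrak{a})/2}$ contributes $T\log(q/\pi)+O(1)$, and Stirling's formula applied to $\Gamma((s+\mathfrak{a})/2)$ gives $T\log(T/2)-T+O(1)$. The factor $L(s,\chi)$ contributes $O(1)$ along $\Re s = 2$ by absolute convergence of the Euler product, and $O(\log qT)$ along $\Re s = 1/2$ by the Backlund-type estimate: Jensen's formula on a disc of radius slightly larger than $3/2$ centred at $2+iT$, combined with the convexity bound $|L(s,\chi)|\ll (q(|T|+2))^{O(1)}$ for $\Re s \ge -1$. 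Summing these pieces, doubling, and dividing by $2\pi$ recovers the claimed asymptotic.

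The only delicate input is the Backlund bound on $\arg L(s,\chi)$ along the critical line; the rest is Stirling and bookkeeping. Since every step is entirely classical, in practice the proof can simply be cited from \cite[Chapter 14]{MV06}.
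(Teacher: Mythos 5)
Your primitive-character argument is exactly the classical Riemann--von Mangoldt proof underlying the reference the paper itself cites (the paper's proof is just ``see \cite[Corollaries 14.3 \& 14.7]{MV06}, extended to imprimitive characters via \cite[Equation 10.20]{MV06}''), and that part of your sketch is sound: the contour, the functional-equation reflection, Stirling for the $\Gamma$-factor and the Backlund--Jensen bound for $\Delta_C\arg L$ do recombine to $\frac{T}{\pi}\log\frac{qT}{2\pi}-\frac{T}{\pi}+\mathcal{O}(\log qT)$ for primitive $\chi$ modulo $q$. (Two small slips there: when $\mathfrak{a}=0$ the trivial zero of $L$ at $s=0$ lies \emph{inside} your rectangle --- it is harmless only because the pole of $\Gamma(s/2)$ cancels it, so it is not a zero of $\xi$; and for $q^{*}=1$ your $\xi$ must carry the extra factor $\tfrac12 s(s-1)$ to kill the pole of $\zeta$, contributing $\mathcal{O}(1)$.)

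The genuine gap is the reduction to primitive characters. If $\chi$ mod $q$ is induced by $\chi^{*}$ mod $q^{*}$, then indeed the extra zeros from $\prod_{p\mid q,\,p\nmid q^{*}}(1-\chi^{*}(p)p^{-s})$ sit on $\Re s=0$ and are excluded from $N(0,T,\chi)$, so $N(0,T,\chi)=N(0,T,\chi^{*})=\frac{T}{\pi}\log\frac{q^{*}T}{2\pi}-\frac{T}{\pi}+\mathcal{O}(\log q^{*}T)$. But your claim that ``replacing $\log q$ by $\log q^{*}$ is absorbed into the $\mathcal{O}(\log qT)$ error'' is false: the two main terms differ by $\frac{T}{\pi}\log(q/q^{*})$, which is of main-term size (take $\chi=\chi_{0}$ mod a large $q$, where $q^{*}=1$). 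So the displayed formula cannot be proved as an asymptotic \emph{equality} for imprimitive $\chi$; what survives is the inequality $N(0,T,\chi)\leq\frac{T}{\pi}\log\frac{qT}{2\pi}-\frac{T}{\pi}+\mathcal{O}(\log qT)$, since $q^{*}\leq q$. This imprecision is shared by the paper's own statement, and the upper bound is all that is ever used downstream (the theorem only enters through upper bounds on sums over zeros), but your write-up should either state the result with the conductor $q^{*}$ in the main term or weaken the imprimitive case to an inequality rather than assert the absorption into the error term.
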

\begin{proof}
    See \cite[Corollaries 14.3 \& 14.7]{MV06}, which we can generalize to non-primitive characters via \cite[10.20]{MV06}.
\end{proof}

The zero-free regions in condition \ref{hyp: zero-free region} notably account for the possible existence of a real "exceptional" zero $\beta_1$, which could significantly worsen our results. We can however still give an upper bound on these zeroes with Siegel's theorem.
\begin{theorem}\label{thm: Siegel zero bound}
    If $\beta_0$ is a real zero of $L(s,\chi)$, then for any ${\varepsilon_0}>0$ there exists an (ineffective) constant $c_0>0$ only depending on ${\varepsilon_0}$, such that $\beta_0<1-\frac{c_0}{q^{\varepsilon_0}}$.
\end{theorem}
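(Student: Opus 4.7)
The plan is to follow Estermann's classical proof of Siegel's theorem via an auxiliary $L$-function product with nonnegative Dirichlet coefficients. First I would reduce to primitive characters, since the zeros of $L(s,\chi)$ for imprimitive $\chi$ coincide with those of the inducing primitive character in the relevant region (the finite Euler factors $\prod_{p\mid q}(1-\chi^*(p)p^{-s})$ have no zeros in $\mathrm{Re}(s)>0$). So let $\chi=\chi_2$ denote a fixed primitive real character modulo $q=q_2$ and let $\beta_0=\beta_2$ be a putative real zero of $L(s,\chi_2)$.

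Given a second distinct primitive real character $\chi_1$ modulo $q_1$, I would introduce
\begin{equation*}
    F(s) := \zeta(s)\thinspace L(s,\chi_1)\thinspace L(s,\chi_2)\thinspace L(s,\chi_1\chi_2).
\end{equation*}
Since $\chi_1\neq\chi_2$ are real, $\chi_1\chi_2$ is a nontrivial Dirichlet character, so $F$ is meromorphic on $\mathbb{C}$ with only a simple pole at $s=1$ of residue $\rho := L(1,\chi_1)L(1,\chi_2)L(1,\chi_1\chi_2)$. A direct local Euler factor calculation, using that $\chi_i(p)\in\{0,\pm 1\}$, shows $F(s)=\sum_{n\geq 1} a_n n^{-s}$ with $a_n\geq 0$ and $a_1=1$. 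The key analytic input is then a Landau--type positivity inequality: expanding $F(s)-\rho/(s-1)$ in Taylor series around $s=2$, exploiting $a_n\geq 0$ together with standard convexity estimates for $|L(s,\chi_i)|$ on vertical lines, and evaluating at $s=\beta_2$ (where $F(\beta_2)=0$), one obtains a lower bound of shape
\begin{equation*}
    1 - \beta_2 \geq c_1\thinspace (1-\beta_1)\thinspace L(1,\chi_1)\thinspace (q_1 q_2)^{-C(1-\beta_1)}(\log q_1 q_2)^{-3},
\end{equation*}
where $\beta_1$ is a real zero of $L(s,\chi_1)$ to be chosen and $C,c_1$ are absolute. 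Combining this with $L(1,\chi)\ll \log q$ to bound $\rho$ from above and $L(1,\chi_1)\gg 1-\beta_1$ from below, the right-hand side depends on $\chi_1$ but only polynomially on $q_2$ with exponent $C(1-\beta_1)$, which can be made $<\varepsilon_0$ by choosing $\beta_1$ sufficiently close to $1$.

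The central idea, and the source of the ineffectivity, is then a dichotomy. Fix $\varepsilon_0>0$. Either (a) there exists a primitive real $\chi_1$ with real zero $\beta_1\geq 1-\varepsilon_0/(2C)$, in which case applying the inequality above with this fixed $\chi_1$ gives $1-\beta_2\gg_{\varepsilon_0} q_2^{-\varepsilon_0}$ for every other real primitive $\chi_2$; or (b) no such $\chi_1$ exists, in which case every real primitive $L$-function satisfies $\beta_0 < 1-\varepsilon_0/(2C)$ uniformly, which is strictly stronger than the claimed bound. The degenerate overlap $\chi_2=\chi_1$ is handled either by introducing a third auxiliary real character of small modulus or, more cheaply, by the trivial estimate $L(1,\chi)\gg q^{-1/2}$ for a single real character, which already yields a weaker analogue of the claim.

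The main obstacle is not technical but structural: one cannot decide which of cases (a) and (b) holds, and in case (a) neither the modulus $q_1$ nor the zero $\beta_1$ are accessible. Consequently the constant $c_0=c_0(\varepsilon_0)$ in the theorem depends on $\varepsilon_0$ in an irreducibly ineffective way, since it implicitly contains $q_1^{\varepsilon_0/2}$. This is the well-known hallmark of Siegel's theorem, and effectivizing the constant remains a famous open problem; every known proof of Theorem \ref{thm: Siegel zero bound} trades effectivity for the uniformity in $q$.
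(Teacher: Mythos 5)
The paper itself does not prove this statement: it is Siegel's theorem, quoted directly from \cite[Corollary 11.15]{MV06}, so there is no internal argument to compare against. Your outline is the classical Estermann--Landau proof (as in Davenport, Chapter 21), and its architecture is exactly right: the auxiliary product $F(s)=\zeta(s)L(s,\chi_1)L(s,\chi_2)L(s,\chi_1\chi_2)$ with nonnegative coefficients and $a_1=1$, the reduction to primitive real characters, the Landau-type positivity estimate near $s=1$, the dichotomy over whether some real primitive $\chi_1$ has a real zero very close to $1$, and the resulting ineffectivity of $c_0(\varepsilon_0)$.

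One step would fail as literally written. The positivity inequality, of the shape $F(s)\ge \tfrac12 - c\,\rho\,(1-s)^{-1}(q_1q_2)^{C(1-s)}$ for real $s$ near $1$, must be evaluated at $s=\beta_1$, the real zero of the \emph{fixed auxiliary} $L(s,\chi_1)$ (or at any point where $F\le 0$ can be certified), not at $s=\beta_2$ as you state. Evaluating at $\beta_1$ gives $\rho\gg(1-\beta_1)(q_1q_2)^{-C(1-\beta_1)}$, hence $L(1,\chi_2)\gg_{\varepsilon_0}q_2^{-\varepsilon_0/2}$ after dividing by $L(1,\chi_1)L(1,\chi_1\chi_2)\ll\log q_1\log (q_1q_2)$ --- so $L(1,\chi_1)$ belongs in the denominator, bounded \emph{above} (a constant once $\chi_1$ is fixed), not in the numerator bounded below --- and then $1-\beta_2\gg L(1,\chi_2)/\log^2 q_2$ by the mean value theorem, since $L(1,\chi_2)=L(1,\chi_2)-L(\beta_2,\chi_2)\ll(1-\beta_2)\log^2 q_2$. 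If instead you evaluate at $\beta_2$, you get $\rho\gg(1-\beta_2)(q_1q_2)^{-C(1-\beta_2)}$; combining with the same bound $L(1,\chi_2)\ll(1-\beta_2)\log^2 q_2$ the factor $1-\beta_2$ cancels, and with $\chi_1$ fixed and $q_2\rightarrow\infty$ no lower bound on $1-\beta_2$ survives, so that route collapses. With the evaluation point corrected, your sketch is the standard proof; two routine addenda are worth recording: the overlap $\chi_2=\chi_1$ is trivial because $L(1,\chi_1)\neq0$ for that single fixed character, and for complex $\chi$ (which the statement formally allows) real zeros are already excluded by the effective classical zero-free region $\beta<1-c/\log q$, which is stronger than the claimed $1-c_0q^{-\varepsilon_0}$.
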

\begin{proof}
    See \cite[Corollary 11.15]{MV06}.
\end{proof}

\section{Proofs of Main results}
\subsection{Primes in all short arithmetic progressions}
In the preliminaries we reduced the error terms of various twisted prime counting functions to functions of $\Delta_\psi(x,y,\chi)$. It therefore suffices to show the main results in terms of these error terms. We start by showing a generalization of Ingham's proof \cite{Ing32} given a zero-free region. Theorem \ref{thm: Ingham} then follows from the following Lemma.

\begin{lemma}
    Assume Condition \ref{hyp: zero-free region}. Let $q\leq Q$ and $y\in [qx \exp(-\omega(x))\log^{2+\varepsilon} qx, x]$. We have
    \begin{equation*}
        \frac{1}{\varphi(q)}\sum_{\chi\thinspace (q)}|\Delta_\psi(x,y,\chi)| \ll \cB(x,q) + x\log^2 qx \exp(-\omega(x)).
    \end{equation*}
    We may omit the term $\cB$ if we restrict $q\leq (\frac{\log x}{\omega(x)})^\ell$ for any $\ell>0$.
\end{lemma}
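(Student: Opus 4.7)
The plan is to follow Ingham's method via the truncated Riemann--von Mangoldt explicit formula (Theorem \ref{thm: chebyshev approx zeroes}). For each Dirichlet character $\chi$ modulo $q$ and a height $T \in [T_0, x]$ to be chosen,
\begin{equation*}
\Delta_\psi(x,y,\chi) = -\sum_{\substack{\rho(\chi)\\|\gamma|\leq T}}\frac{(x+y)^\rho - x^\rho}{\rho} + \mathcal{O}\bigg(\frac{x}{T}\log^2 qx\bigg).
\end{equation*}
Writing $\frac{(x+y)^\rho - x^\rho}{\rho} = \int_x^{x+y} u^{\rho-1}\diff u$ and invoking Condition \ref{hyp: zero-free region} shows that each non-exceptional zero contributes at most $\min(|y|, 2x/|\gamma|)\cdot x^{-\eta(|\gamma|)}$, while a putative exceptional real zero $\beta_0$ (occurring for at most the quadratic character) contributes $\mathcal{O}(x^{\beta_0})$. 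Averaging over the $\varphi(q)$ characters, the exceptional-zero contribution becomes $\mathcal{B}(x,q)=x^{\beta_0}/\varphi(q)$, and Siegel's theorem (Theorem \ref{thm: Siegel zero bound}) with $\varepsilon_0=1/\ell$ absorbs this term into the main error whenever $q \leq (\log x/\omega(x))^\ell$.

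For the non-exceptional zeros, the monotonicity of $\eta$ gives $x^{-\eta(|\gamma|)} \leq x^{-\eta(T)}$ for all $|\gamma|\leq T$, so the zero contribution per character is at most $x^{-\eta(T)}\sum_{|\gamma|\leq T}\min(|y|, 2x/|\gamma|)$. A dyadic decomposition using the vertical density $N(0,T,\chi) \ll T\log qT$ from Theorem \ref{thm: vertical zeroes}, together with the split at $T_1 = x/|y|$ (below which $|y| \leq 2x/|\gamma|$ and above which the reverse holds), yields the standard estimate $\sum_{|\gamma|\leq T}\min(|y|, 2x/|\gamma|) \ll x\log^2 qT$. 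Combining everything, we get
\begin{equation*}
\frac{1}{\varphi(q)}\sum_{\chi\,(q)}|\Delta_\psi(x,y,\chi)| \ll x^{1-\eta(T)}\log^2 qT + \frac{x}{T}\log^2 qx + \mathcal{B}(x,q).
\end{equation*}

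It then remains to choose $T$. Writing $T^* = \argmin_{[T_0,x]}[(\log x)\eta(T) + \log T]$ (so that $\omega(x)=(\log x)\eta(T^*)$), we need $T \geq e^{\omega(x)}$, ensuring $\frac{x}{T}\log^2 qx \ll xe^{-\omega(x)}\log^2 qx$, and $\eta(T)\geq \eta(T^*)$, ensuring $x^{1-\eta(T)} \leq xe^{-\omega(x)}$. For strictly decreasing $\eta$ (e.g.\ of Korobov--Vinogradov type), one has $T^* \asymp e^{\omega(x)}$ and may take $T=T^*$; in the altered case $\eta\equiv\eta_0$ constant, $\eta(T)=\eta_0=\omega(x)/\log x$ holds throughout $[T_0,x]$, and $T=e^{\omega(x)}$ works. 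The lower bound $|y|\geq qxe^{-\omega(x)}\log^{2+\varepsilon}qx$ guarantees that $T_1$ sits well within this regime so the resulting balance is valid.

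The main obstacle is this final optimization: one must verify that in every case admitted by the altered definition of $\omega$, a $T \in [T_0, x]$ exists simultaneously satisfying the two constraints above. The coupling inequality is $x^{-\eta(T)} \leq e^{-\omega(x)} T/T^*$, which follows directly from the minimality defining $T^*$, and it is this inequality that must be exploited with care when $T^*$ is attained on the boundary (rather than in the interior) of $[T_0,x]$.
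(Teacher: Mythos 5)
Your overall route is the same as the paper's: truncate the explicit formula at a height $T$ (Theorem \ref{thm: chebyshev approx zeroes}), bound the zero sum via the zero-free region together with the vertical count $N(0,T,\chi)\ll T\log qT$ (Theorem \ref{thm: vertical zeroes}) to arrive at $\frac{1}{\varphi(q)}\sum_{\chi\thinspace(q)}|\Delta_\psi(x,y,\chi)|\ll \cB(x,q)+x^{1-\eta(T)}\log^2 qT+\frac{x}{T}\log^2 qx$, and remove $\cB$ by Siegel's theorem exactly as in \eqref{eq: omit siegel zeroes} (note you should take $\varepsilon_0<1/\ell$ strictly rather than $\varepsilon_0=1/\ell$, so that the saving exceeds $\omega(x)$ by a growing factor rather than by the ineffective constant $c_0$). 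Up to that point your argument matches the paper's proof.

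The gap is the final choice of $T$, which you flag as ``the main obstacle'' but do not close, and the specific claims you offer there fail for strictly decreasing $\eta$. Your two constraints $T\geq e^{\omega(x)}$ and $\eta(T)\geq\eta(T^*)$ are simultaneously satisfiable only if $\log T^*\geq \log x\,\eta(T^*)=\omega(x)$. For a Korobov--Vinogradov region $\eta(T)\asymp(\log T)^{-2/3}(\log\log T)^{-1/3}$ the stationarity condition $\log x\,|\eta'(T^*)|\,T^*=1$ gives $\omega(x)=\log x\,\eta(T^*)\sim\tfrac32\log T^*$, so $T^*$ is \emph{not} $\asymp e^{\omega(x)}$ (its logarithm is $\sim\tfrac23\omega(x)$); the choice $T=T^*$ leaves the term $\tfrac{x}{T^*}\log^2 qx= x\exp(-\tfrac23\omega(x)+o(\omega(x)))\log^2 qx$, and in fact no admissible $T$ meets both of your constraints. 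Your coupling inequality $x^{-\eta(T)}\leq e^{-\omega(x)}T/T^*$ is correct, but combined with $T\geq e^{\omega(x)}$ it cannot produce anything sharper than $\tfrac{x}{T^*}\log^2 qx$ in this case. The natural repair is to choose $T$ at the balance point $\eta(T)\log x=\log T$ (equivalently, to read $\omega(x)$ as $\max_{T_0\leq T\leq x}\min(\log x\,\eta(T),\log T)$ rather than through the argmin of the sum), which is what the paper's terse ``$T$ was chosen as in $\omega(x)$'' must be taken to mean; with the literal argmin definition your sketch only yields $x\exp(-c\,\omega(x))\log^2 qx$ for some constant $c<1$ in the Korobov--Vinogradov case, so the key optimization step remains open in your write-up.
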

\begin{proof}
    Obviously we have $|\frac{x^\rho-(x-y)^\rho}{\rho}|\ll \frac{x^\beta}{|\gamma|}$ for any $\rho$ with $\beta\geq 0$ and $0<y\leq x$. By \mbox{Theorems \ref{thm: chebyshev approx zeroes} \& \ref{thm: vertical zeroes}} we obtain
    \begin{equation*}
    \begin{split}
        \frac{1}{\varphi(q)}\sum_{\chi\thinspace (q)}|\Delta_\psi(x,y,\chi)|\ll& \frac{1}{\varphi(q)}\sum_{\chi\thinspace(q)}\sum_{\substack{\rho\thinspace(\chi)\\|\gamma|\leq T}} \frac{x^{\beta}}{|\gamma|}+\frac{x}{T}\log^2 qx\\
        \ll& x^{1-\eta(T)}\sum_{\substack{1\leq n\leq T}} \frac{\log qn}{n}+\frac{x}{T}\log^2 qx\\
        \ll& \thinspace\cB(x,q) + x\log^2 qx \exp(-\omega(x)),
    \end{split}
    \end{equation*}
    where $T$ was chosen as in $\omega(x)$. To omit the term $\cB$ we apply Theorem \ref{thm: Siegel zero bound} and obtain
    \begin{equation}\label{eq: omit siegel zeroes}
        \cB(x,q)\ll \frac{x}{\varphi(q)}\exp(-c_0 \frac{\log x}{q^{\varepsilon_0}})\ll \frac{x}{\varphi(q)}\exp(-c_0 \log^{1-\ell\varepsilon_0}x \thinspace\omega^{\ell\varepsilon_0}(x))\ll x\exp(-\omega(x)),
    \end{equation}
    for any $\varepsilon_0<1/\ell$.
\end{proof}

To prove Theorem \ref{thm: primes in all intervals} we begin with a simplification of the truncated Riemann-von Mangoldt explicit formula in Theorem \ref{thm: chebyshev approx zeroes}.
\begin{lemma}\label{lemma: all ints as sum of zeroes}
    Let $\chi$ be a Dirichlet character modulo $q$, let $0<y\leq x$ and $2\leq T\leq x$. We have
    \begin{equation*}
        \Delta_\psi(x,y,\chi)\ll \frac{y}{x}\sum_{\substack{\rho\thinspace(\chi)\\|\gamma|\leq T}} x^{\beta}+\frac{x}{T}\log^2 qx.
    \end{equation*}
\end{lemma}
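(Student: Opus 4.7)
The plan is to apply the truncated explicit formula of Theorem~\ref{thm: chebyshev approx zeroes}, which already supplies the error term $O(\frac{x}{T}\log^2 qx)$, and then to show that each summand in the resulting zero-sum satisfies
\[
\left|\frac{(x+y)^\rho-x^\rho}{\rho}\right| \ll \frac{|y|}{x}\,x^\beta.
\]
Summing this estimate over the zeros $\rho(\chi)$ with $|\gamma|\leq T$ then produces the main term of the lemma.

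For the per-zero bound I would use the integral representation $\frac{(x+y)^\rho-x^\rho}{\rho}=\int_x^{x+y}u^{\rho-1}\,du$ together with $|u^{\rho-1}|=u^{\beta-1}$ and the triangle inequality, yielding
\[
\left|\frac{(x+y)^\rho-x^\rho}{\rho}\right| \leq |y|\,\sup_{u\in I}\,u^{\beta-1},
\]
where $I$ is the real interval joining $x$ and $x+y$. Non-trivial zeros have $\beta\in(0,1)$, so $u^{\beta-1}$ is monotonically decreasing on $I$ and the supremum lives at $\min(x,x+y)$. For $y>0$ this immediately gives $|y|\,x^{\beta-1}=\frac{|y|}{x}x^\beta$; for $y<0$ with $|y|\leq x/2$ the endpoint $x+y\geq x/2$ still yields the same bound up to a harmless factor of two.

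The main obstacle I expect is the corner case $y<0$ with $|y|>x/2$, in which $x+y$ can be very close to $0$ and the factor $(x+y)^{\beta-1}$ becomes large for small $\beta$. Here I would resort to the trivial bound $\left|\frac{(x+y)^\rho-x^\rho}{\rho}\right|\leq\frac{2\,x^\beta}{|\rho|}$. Since $|y|/x\geq 1/2$ in this regime, this is already $\ll \frac{|y|}{x}x^\beta$ for all zeros with $|\rho|\geq 1$, which covers all but at most $O(\log q)$ low-lying zeros with $|\rho|<1$ (as quantified by Theorem~\ref{thm: vertical zeroes}). The remaining bounded contribution of these few zeros is controlled via a standard lower bound on $|\rho|$ for non-trivial zeros of Dirichlet $L$-functions, and can be absorbed either into the claimed main term or into the error term $O(\frac{x}{T}\log^2 qx)$.
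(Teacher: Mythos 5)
Your proposal is correct and follows essentially the same route as the paper: apply the truncated explicit formula of Theorem~\ref{thm: chebyshev approx zeroes} and bound each zero term via $\frac{(x+y)^\rho-x^\rho}{\rho}=\int_x^{x+y}u^{\rho-1}\diff u$, giving $\ll |y|\,x^{\beta-1}$. Your extra discussion of the case $y<0$ with $|y|>x/2$ (where the lower endpoint of the interval, not $x$, controls $u^{\beta-1}$, forcing a switch to the bound $\frac{2x^\beta}{|\rho|}$ and a separate treatment of the $O(\log q)$ low-lying zeros) is in fact more careful than the paper's own one-line estimate, which writes the integral over $[x,x+|y|]$ without distinguishing the sign of $y$.
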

\begin{proof}
    By Theorem \ref{thm: chebyshev approx zeroes} we have
\begin{equation*}
    |\Delta_\psi(x,y,\chi)|\leq \sum_{\substack{\rho\thinspace(\chi)\\|\gamma|\leq T}}\bigg|\frac{x^\rho-(x-y)^\rho}{\rho}\bigg|+\cO\bigg( \frac{x}{T}\log^2 qx\bigg).
\end{equation*}
    For $\beta\geq 0$ we have
\begin{equation*}
\begin{split}
    \bigg|\frac{x^\rho-(x-y)^\rho}{\rho}\bigg|=&\bigg|\int_{x-y}^{x} u^{\rho-1}\diff u\bigg|\leq \int_{x-y}^{x} u^{\beta-1}\diff u\ll y x^{\beta-1}.
\end{split}
\end{equation*}
\end{proof}

\begin{lemma}
    Assume Conditions \ref{hyp: zero-free region} \& \ref{hyp: zero-density} with $\eta_0<\frac{1}{A}$. Let $q\leq Q$ and 
    \begin{equation*}
        y\in\bigg[q^2x^{1-\frac{1}{A}}\exp\bigg(\frac{\eta^{-1}(x^{1/A})}{A}\log g(q,x)\bigg)\log^{1+\tau+\varepsilon} qx,x\bigg],
    \end{equation*} for any $\varepsilon>0$ and where $\tau=\tau(x,A,\eta)=\frac{1}{1+A\eta(x^{1/A})}$. We have
    \begin{equation*}
        \frac{1}{\varphi(q)}\sum_{\chi\thinspace(q)}|\Delta_\psi(x,y,\chi)| \ll \frac{y}{x}\cB(x,q)+ y \bigg(\frac{x^{1-\frac{1}{A}}}{y}\bigg)^{1-\tau}q^{1-2\tau}g^\tau(q,x)\log^{2-\tau} qx .
    \end{equation*}
    We may omit the term $\cB$ if we additionally restrict $q\leq \log^\ell x$ for any $\ell>0$.
\end{lemma}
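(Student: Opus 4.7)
My plan is to start from Lemma~\ref{lemma: all ints as sum of zeroes}, sum over all Dirichlet characters $\chi$ modulo $q$, and divide by $\varphi(q)$. Condition~\ref{hyp: zero-free region} restricts the inner sum to zeros with $\beta\leq 1-\eta(T)$, with at most one exceptional real zero $\beta_0$ contributing $\frac{|y|}{x\varphi(q)}x^{\beta_0}=\frac{|y|}{x}\cB(x,q)$. Removing this term when $q\leq\log^\ell x$ goes through verbatim from Siegel's theorem as in~\eqref{eq: omit siegel zeroes}, up to the harmless factor $|y|/x\leq 1$.

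For the remaining (non-exceptional) zeros, the key step is partial summation applied to the nonincreasing function $F(\sigma):=\sum_\chi N(\sigma,T,\chi)$:
\[
\sum_\chi \sum_{\substack{\rho(\chi)\\|\gamma|\leq T,\,\beta>1/2}} x^\beta \ll x^{1/2}F(1/2) + \log x\int_{1/2}^{1-\eta(T)} x^\sigma F(\sigma)\diff\sigma.
\]
Condition~\ref{hyp: zero-density} gives $F(\sigma)\ll(qT)^{A(1-\sigma)}g(q,T)$, so the integrand equals $g(q,T)(qT)^A\cdot K^\sigma$ with $K:=x/(qT)^A$. If $T$ is chosen with $(qT)^A\leq x$ (which will hold for our choice near $T\asymp x^{1/A}$), the integrand is nondecreasing and peaks at $\sigma=1-\eta(T)$, giving
\[
\frac{|y|}{x\varphi(q)}\sum_\chi\sum_\rho x^\beta \ll \frac{|y|g(q,T)x^{-\eta(T)}(qT)^{A\eta(T)}\log x}{\varphi(q)\log(x/(qT)^A)},
\]
in which the boundary term $x^{1/2}F(1/2)$ is easily absorbed under the same hypothesis.

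Combining with the tail error $x\log^2 qx/T$ from Lemma~\ref{lemma: all ints as sum of zeroes} and equating the two contributions, the optimum $T$ satisfies the implicit equation $T^{1+A\eta(T)}\asymp x^{1+\eta(T)}\varphi(q)/(|y|g(q,T)q^{A\eta(T)})$. Treating $\eta(T)\approx\eta(x^{1/A})$ as effectively constant over the range, one then uses the identities $\tau=1/(1+A\eta(x^{1/A}))$, $1-\tau=A\eta\tau$, and $1-(1+\eta)\tau=(1-1/A)(1-\tau)$ to simplify the common optimum value to $|y|^\tau(x^{1-1/A})^{1-\tau}q\log^2 qx\,(g(q,x)/(q^2\log qx))^\tau$. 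The main obstacle is precisely this implicit dependence of the optimal $T$ on $\eta(T)$: it is resolved by exploiting the monotonicity of $\eta$ together with the shape of the lower bound on $|y|$ — the factor $\exp(\tfrac{1}{A}\eta^{-1}(x^{1/A})\log(\log x\,g(q,x)))$ is exactly what is needed to trap $T$ in a logarithmic neighborhood of $x^{1/A}$, over which $\eta(T)$ varies negligibly and the $\log(x/(qT)^A)$ denominator is of size $\log qx$, so that the target bound is achieved with the claimed logarithmic powers.
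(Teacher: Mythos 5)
Your overall route is the same as the paper's: start from Lemma~\ref{lemma: all ints as sum of zeroes}, average over $\chi$ modulo $q$, bound the zero sum by Riemann--Stieltjes/partial summation against Condition~\ref{hyp: zero-density} together with the vertical count of Theorem~\ref{thm: vertical zeroes}, balance the result against the $\frac{x}{T}\log^2 qx$ term through the choice of $T$, and remove $\cB$ via Siegel's theorem as in \eqref{eq: omit siegel zeroes}; your identities $1-\tau=A\eta\tau$ and $1-(1+\eta)\tau=(1-\frac{1}{A})(1-\tau)$ are exactly what makes the two balanced contributions equal the claimed bound. Two minor points: the sum in Lemma~\ref{lemma: all ints as sum of zeroes} also contains zeros with $\beta\le\frac12$, which your partial summation from $\sigma=\frac12$ does not cover; they contribute $\ll x^{1/2}\sum_\chi N(0,T,\chi)\ll x^{1/2}qT\log qT$ and are absorbed just like your boundary term (the paper instead splits at $\sigma=1-\frac1A$ and uses the vertical count below that abscissa, which is marginally cleaner but equivalent).

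The one step that is wrong as stated is your resolution of the implicit dependence of the optimal $T$ on $\eta(T)$. The lower bound on $|y|$ does \emph{not} trap $T$ in a logarithmic neighborhood of $x^{1/A}$: that is only true at the bottom of the admissible range of $|y|$, whereas for $|y|$ near $x$ your own implicit equation gives $T$ of size roughly $x^{\eta\tau}$ up to small factors, which is polynomially smaller than $x^{1/A}$ (since $\frac{\eta}{1+A\eta}<\frac1A$ always), and there $\eta(T)$ need not be close to $\eta(x^{1/A})$, so the "negligible variation" argument fails on most of the range. The correct (and the paper's) resolution is one-sided and needs no two-sided approximation: define $T$ explicitly with $\eta(x^{1/A})$ in place of $\eta(T)$, namely $T=\big(\frac{q^{1-A\eta(x^{1/A})}x^{1+\eta(x^{1/A})}}{|y|}\,\frac{\log qx}{g(q,x)}\big)^{\tau}$; the lower bound on $|y|$ guarantees $qT\le x^{1/A}$ (indeed $T=o(x^{1/A}/q)$), and then monotonicity of $\eta$ gives $\eta(T)\ge\eta(x^{1/A})$, whence $(qT/x^{1/A})^{A\eta(T)}\le(qT/x^{1/A})^{A\eta(x^{1/A})}$ because the base is at most $1$. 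With that replacement your computation closes and yields exactly the stated bound, so the defect lies in the justification of this step rather than in the final estimate.
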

\begin{proof}
By Riemann-Stieltjes integration (see \cite[Theorem A.2]{MV06}) we have
    \begin{equation}\label{eq: RS integration all intervals}
    \begin{split}
        \frac{1}{\varphi(q)}\sum_{\chi\thinspace(q)}\sum_{\substack{\rho\thinspace(\chi)\\|\gamma|\leq T}} x^{\beta-1} \ll & \frac{\cB(x,q)}{x}+\frac{1}{\varphi(q)x^{1/A}}\sum_{\chi\thinspace(q)}\cN(0,T,\chi)\\
        &+\frac{\log x}{\varphi(q)}\int_{1-1/A}^{1-\eta(T)}x^{\sigma-1}   \sum_{\chi\thinspace(q)} \cN(\sigma,T,\chi)  \diff \sigma\\
        \ll & \frac{\cB(x,q)}{x}+\frac{qT}{\varphi(q)x^{1/A}}\log qT\\
        &+\frac{g(q,T)\log x}{\varphi(q)}\bigg[\bigg(\frac{qT}{x^{1/A}}\bigg)^{A\eta(T)}-\frac{qT}{x^{1/A}} \bigg].
    \end{split}
    \end{equation}
    We take Lemma \ref{lemma: all ints as sum of zeroes}, average over $\chi\thinspace (q)$, insert \eqref{eq: RS integration all intervals} and choose
    \begin{equation*}
        T=\bigg(\frac{q^{1-A\eta(x^{1/A})}x^{1+\eta(x^{1/A})}}{y} \frac{\log qx}{g(q,x)}\bigg)^\frac{1}{1+A\eta(x^{1/A})}= o\bigg( \frac{x^\frac{1}{A}}{q\log x}\bigg).
    \end{equation*} 
    The possible omission of the term $\cB$ follows from \eqref{eq: omit siegel zeroes}.
\end{proof}

\subsection{Primes in almost all short arithmetic progressions}

As we have already found in the preliminaries, the variations on the Chebyshev functions are closely related. With Lemmas \ref{lemma: psi to vartheta}--\ref{lemma: int psi to chi} we reduced Theorems \ref{thm: Ingham 2} \& \ref{thm: primes in almost all intervals} to the integral $\int_X^{2X} |\Delta_\psi(u,h,\chi)|^2 \diff u$ averaged over $\chi\thinspace(q)$. Instead of evaluating the integral for an interval $h(X)$ dependent on $X$, it is easier to evaluate error terms of the form $\int_1^X |\Delta_\psi(u,\theta u,\chi)|\diff u$ for some $0<\theta\leq 1$. In the following lemma we make an appropriate substitution.

\begin{lemma}\label{lemma: substitution}
    Let $X\geq 1$ and let $0< h\leq X$. We have
    \begin{equation*}
        \int_X^{2X}|\Delta_\psi(u,h,\chi)|^2\diff u\ll \frac{X}{h} \int_{h/3X}^{3h/X}\int_X^{3X}|\Delta_\psi(u,\theta u,\chi)|^2\diff u \diff \theta.
    \end{equation*}
\end{lemma}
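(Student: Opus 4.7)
Writing $R(x) := \psi_\chi(x) - \delta_\chi x$ so that $\Delta_\psi(u,y,\chi) = R(u+y) - R(u)$ for every real $y$, the plan is to exploit the telescoping identity
\[
\Delta_\psi(u,h,\chi) \;=\; \Delta_\psi(u,\theta u,\chi) \;+\; \Delta_\psi\bigl(u+\theta u,\; h-\theta u,\; \chi\bigr),
\]
which holds for every $\theta$ since both sides equal $R(u+h)-R(u)$. The first step is to square, apply $|a+b|^2 \le 2|a|^2 + 2|b|^2$, average $\theta$ over $J := [h/3X, 3h/X]$ (of length $|J| = 8|h|/(3X)$), and integrate $u$ over $[X,2X]$, obtaining
\[
\int_X^{2X}|\Delta_\psi(u,h,\chi)|^2 \diff u \;\ll\; \frac{X}{|h|}\bigl(T_1 + T_2\bigr),
\]
where $T_1 := \int_X^{2X}\!\int_J |\Delta_\psi(u,\theta u,\chi)|^2 \diff\theta \diff u$ and $T_2 := \int_X^{2X}\!\int_J |\Delta_\psi(u+\theta u,\, h-\theta u,\chi)|^2 \diff\theta \diff u$. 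Since $[X,2X]\subset [X,3X]$, the term $T_1$ is already bounded by the double integral on the right-hand side of the lemma.

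The bulk of the work is the analogous estimate for $T_2$. For this I will make the change of variables $w := u(1+\theta)$, $s := h-\theta u$, under which $u = w+s-h$ and a direct Jacobian computation gives $\diff u\,\diff\theta = \diff w\,\diff s /u$, while the integrand transforms as $|\Delta_\psi(u+\theta u,\, h-\theta u,\chi)|^2 = |\Delta_\psi(w,s,\chi)|^2$. Using $1/u \asymp 1/X$ on $[X,2X]$, this turns $T_2$ into $\ll \frac{1}{|h|}\iint_{\mathcal R} |\Delta_\psi(w,s,\chi)|^2\,\diff w\,\diff s$ on the image region $\mathcal R$. The subregion where $s<0$ can be folded back via the symmetry $|\Delta_\psi(w,s,\chi)| = |\Delta_\psi(w+s,-s,\chi)|$, after which I would verify that the resulting region is contained, up to a bounded number of translates, inside the image of the right-hand side's integration region $\{(u,t): u\in[X,3X],\; t\in[uh/3X, 3uh/X]\}$ under the substitution $t = \theta u$.

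The main obstacle is precisely this last geometric containment: $\mathcal R$ straddles the boundaries of the target region, since $|s|$ ranges up to roughly $5|h|$ at one end and arbitrarily close to $0$ at the other. The generous multiplicative ranges $[h/3X, 3h/X]$ (a factor of $9$) and $[X,3X]$ (a factor of $3$) on the right-hand side of the lemma are chosen exactly to absorb the shifts and dilations introduced by the change of variables; once one unravels the geometry, the containment holds up to an absolute constant and the claimed bound on $T_2$ follows, completing the proof.
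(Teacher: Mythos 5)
Your overall strategy---telescoping $\Delta_\psi(u,h,\chi)$ into a piece of length $\theta u$ plus a complementary piece, averaging over an auxiliary parameter, and changing variables---is the same Saffari--Vaughan device the paper uses, but your implementation has a genuine gap, and it sits exactly at the step you defer. Because you average $\theta$ over the whole interval $J=[h/3X,3h/X]$, the length $h-\theta u$ of the second piece passes through $0$: after folding the $s<0$ part you obtain the points $(u+h,\theta u-h)$, whose second coordinate ranges over $(0,5h]$ and in particular gets arbitrarily close to $0$ (likewise the unfolded part with $s=h-\theta u$ small and positive). Every point of the target region $\{(a,b):a\in[X,3X],\ b\in[ah/3X,3ah/X]\}$ has second coordinate at least $h/3$, so the portion of your image with $b<h/3$ is not contained in it, nor in any bounded number of translates: translating in the first coordinate does not change the interval length $b$, and there is no pointwise domination of $|\Delta_\psi(a,b,\chi)|^2$ for tiny $b$ by values with $b\asymp h$. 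Nor is this bad set negligible: for each $u$ the set of $\theta\in J$ with $|\theta u-h|\lesssim h$ has measure comparable to $|J|$, so a positive proportion of $T_2$ lives there and cannot be discarded after the fact (your averaging inequality used all of $J$). Two smaller issues: the Jacobian gives $\diff u\,\diff\theta=\diff w\,\diff s/u$ with $u\asymp X$, so the prefactor should be $1/X$, not $1/|h|$ (as written you would lose a factor $X/|h|$ at the end); and for $h$ comparable to $X$ the first coordinate $u(1+\theta)$ can exceed $3X$, so a preliminary reduction to $|h|\le X/6$ (plus the remark for negative $h$) is needed, as in the paper.

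The paper sidesteps the problem by taking the auxiliary parameter to be the \emph{length} rather than the ratio: after reducing to $|h|\le X/6$ it writes $\Delta_\psi(u,h,\chi)=\Delta_\psi(u,v,\chi)-\Delta_\psi(u+h,v-h,\chi)$ and averages over $v\in[2h,3h]$, so both pieces have lengths in $[h,3h]$, bounded away from $0$; after shifting $u\mapsto u+h$ and only then substituting $w=\theta u$, both contributions land inside $[X,3X]\times[h/3X,3h/X]$ trivially. You can repair your argument in the same way by averaging $\theta$ only over the $u$-dependent subinterval $[2h/u,3h/u]\subset J$, which keeps $\theta u\in[2h,3h]$ and $h-\theta u\in[-2h,-h]$; with that restriction your change of variables and folding go through and essentially reproduce the paper's proof.
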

\begin{proof}
    For any constant $0<c\leq 1$ we have
    \begin{equation*}
        \int_X^{2X}|\Delta_\psi(u,cX,\chi)|^2\diff u\ll \int_{\frac{c}{2}}^c\int_X^{2X}|\Delta_\psi(u,\theta u,\chi)|^2\diff u \diff \theta.
    \end{equation*}
    It therefore suffices to prove this lemma for $0<h\leq X/6$. Let $0<h\leq X/6$, let $2h\leq v \leq 3h$ and $X\leq u \leq 2X$, so that $h\leq v-h\leq 2h$ and $X\leq u+h\leq 3X$. Therefore we get
    \begin{equation*}
    \begin{split}
        |\Delta_\psi(u,h,\chi)|^2 = & |\Delta_\psi(u,v,\chi) -\Delta_\psi(u+h,v-h,\chi)|^2\\
        \ll & |\Delta_\psi(u,v,\chi)|^2 +|\Delta_\psi(u+h,v-h,\chi)|^2
    \end{split}
    \end{equation*}
    
    Integrating over $u\in [X,2X]$, we obtain
    \begin{equation*}
    \begin{split}
         h\int_X^{2X} |\Delta_\psi(u,h,\chi)|^2\diff u \ll &\int_X^{2X} \int_{2h}^{3h}|\Delta_\psi(u,v,\chi)|^2 +|\Delta_\psi(u+h,v-h,\chi)|^2 \diff v \diff u\\
        \ll &\int_X^{2X} \int_{2h}^{3h}|\Delta_\psi(u,v,\chi)|^2\diff v\diff u +\int_X^{3X} \int_{h}^{2h}|\Delta_\psi(u,v,\chi)|^2 \diff v \diff u.
    \end{split}
    \end{equation*}
    We substitute $w:=\theta u$ with $h\leq w\leq 3h$, so that
    \begin{equation*}
        \frac{h}{3X}\leq \frac{h}{u}\leq \theta\leq \frac{3h}{u}\leq \frac{3h}{X}\leq \frac{1}{2},
    \end{equation*}
    and subsequently
    \begin{equation*}
    \begin{split}
         h\int_X^{2X} |\Delta_\psi(u,h,\chi)|^2\diff u \ll &\int_X^{3X} \int_{h}^{3h}|\Delta_\psi(u,w,\chi)|^2\diff w\diff u \\
        \ll &X\int_X^{3X} \int_{h/3X}^{3h/X}|\Delta_\psi(u,\theta u,\chi)|^2\diff \theta\diff u .
    \end{split}
    \end{equation*}
    Since the integrand is continuous on $[X,3X]\times [h/3X,3h/X]$, except for a subset of measure zero, we may exchange the order of integration, concluding the proof.
\end{proof}

For the next step we want to find an asymptotic upper bound in terms of the non-trivial zeroes of $L$-functions. This is also where an ingenious trick from Saffari-Vaughan \cite{SV77} comes in. By integrating over a small set of intervals which contain $[1,X]$, we lose a constant factor, but improve the rate of absolute convergence.

\begin{lemma}\label{lemma: int theta chi as sum over zeroes}
    Let $4\leq T\leq X$, and let $0<\theta\leq 1$. We have
    \begin{equation*}
    \begin{split}
        \int_1^{X}| \Delta_\psi(u,\theta u,\chi)|^2 \diff u\ll \sum_{\substack{\rho \thinspace (\chi)\\ |\gamma|\leq T}}X^{1+2\beta}\min(\theta^2,|\gamma|^{-2}) \log q(|\gamma|+2)+\frac{X^3}{T^2}\log^4 qX. 
    \end{split}
    \end{equation*}
\end{lemma}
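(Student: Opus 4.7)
The plan is to apply the truncated Riemann--von Mangoldt explicit formula (Theorem~\ref{thm: chebyshev approx zeroes}) to express $\Delta_\psi(u,\theta u,\chi)$ as a short sum over nontrivial zeros of $L(s,\chi)$, and then evaluate the resulting mean square by expanding the square term by term. First, applying the explicit formula at the endpoints $u$ and $(1+\theta)u$ and factoring $((1+\theta)u)^\rho-u^\rho = u^\rho\bigl((1+\theta)^\rho-1\bigr)$ gives
\begin{equation*}
\Delta_\psi(u,\theta u,\chi)=-\sum_{\substack{\rho\,(\chi)\\|\gamma|\le T}}c_\rho\,u^\rho+\cO\!\left(\frac{u\log^2 qu}{T}\right),\qquad c_\rho:=\frac{(1+\theta)^\rho-1}{\rho}.
\end{equation*}
Squaring via $(a+b)^2\ll a^2+b^2$ and integrating, the error term contributes $\int_1^X u^2\log^4 qu/T^2\,\diff u\ll X^3\log^4 qX/T^2$, matching the tail term in the claim.

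Next, I would establish the pointwise bound $|c_\rho|^2\ll \min(\theta^2,|\gamma|^{-2})$. The estimate $|c_\rho|\ll\theta$ follows from $(1+\theta)^\rho-1=\rho\int_1^{1+\theta}t^{\rho-1}\,\diff t$ combined with $t^{\beta-1}\ll 1$ on $[1,2]$, while $|c_\rho|\ll|\gamma|^{-1}$ for $|\gamma|\ge 1$ follows from the trivial estimate $|(1+\theta)^\rho-1|\le (1+\theta)^\beta+1\le 3$ together with $|\rho|\ge|\gamma|$; the case $|\gamma|<1$ is absorbed into the $\theta^2$ branch.

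For the main term I expand the square and integrate:
\begin{equation*}
\int_1^X\Bigl|\sum_\rho c_\rho u^\rho\Bigr|^2\diff u=\sum_{\rho_1,\rho_2}c_{\rho_1}\overline{c_{\rho_2}}\cdot\frac{X^{1+\rho_1+\overline{\rho_2}}-1}{1+\rho_1+\overline{\rho_2}}.
\end{equation*}
Bounding the numerator by $X^{1+\beta_1+\beta_2}$ and using $|1+\rho_1+\overline{\rho_2}|\ge 1+|\gamma_1-\gamma_2|$, an AM--GM step $|c_{\rho_1}c_{\rho_2}|X^{\beta_1+\beta_2}\le\tfrac{1}{2}(|c_{\rho_1}|^2X^{2\beta_1}+|c_{\rho_2}|^2X^{2\beta_2})$ with symmetry in $(\rho_1,\rho_2)$ reduces matters to
\begin{equation*}
\ll X\sum_{\rho_1}|c_{\rho_1}|^2X^{2\beta_1}\sum_{\rho_2\colon|\gamma_2|\le T}\frac{1}{1+|\gamma_1-\gamma_2|}.
\end{equation*}

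The main obstacle is bounding the inner sum by $\log q(|\gamma_1|+2)$. Theorem~\ref{thm: vertical zeroes} implies that any unit-length interval centred near height $t$ contains $\ll \log q(|t|+2)$ zeros; partitioning the inner sum dyadically in $|\gamma_2-\gamma_1|$ and balancing the harmonic weight against the local zero density produces the required logarithmic factor. Obtaining the clean $\log q(|\gamma_1|+2)$ factor---rather than a spurious $\log qT$---requires the near range $|\gamma_2-\gamma_1|\le 1$ to dominate, with the geometric decay in the dyadic tail absorbing the growth in local density at distant heights. Inserting this estimate into the display above and combining with the error tail from Step~1 completes the proof.
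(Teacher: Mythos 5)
There is a genuine gap at the final step, and it is precisely the point of the lemma. After expanding the square over $[1,X]$ you arrive at the off-diagonal weight $\lvert 1+\rho_1+\overline{\rho_2}\rvert^{-1}\asymp (1+\lvert\gamma_1-\gamma_2\rvert)^{-1}$, and you then claim that
\begin{equation*}
\sum_{\substack{\rho_2\,(\chi)\\ \lvert\gamma_2\rvert\le T}}\frac{1}{1+\lvert\gamma_1-\gamma_2\rvert}\ll \log q(\lvert\gamma_1\rvert+2),
\end{equation*}
arguing that the dyadic tail decays geometrically. It does not: by Theorem~\ref{thm: vertical zeroes} a dyadic block $\lvert\gamma_2-\gamma_1\rvert\asymp 2^j$ contains $\asymp 2^j\log q(\lvert\gamma_1\rvert+2^j)$ zeros, so with the harmonic weight each block contributes $\asymp\log q(\lvert\gamma_1\rvert+2^j)$ --- the weight exactly cancels the block length, there is no decay, and summing over the $\asymp\log T$ scales gives a quantity of size $\log T\log qT$ rather than $\log q(\lvert\gamma_1\rvert+2)$. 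With only a first power in the denominator your argument therefore proves the lemma with an extra factor of order $\log T\log qT$, which defeats the purpose of the statement (the refinement over the classical treatment consists exactly in saving these logarithms).

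The missing ingredient is the Saffari--Vaughan averaging device used in the paper: instead of integrating over $[1,X]$ directly, one bounds $\int_4^X$ by $\int_1^2\int_{2v}^{Xv}\,\diff u\,\diff v$, at the cost of a constant factor. Integrating the cross terms first in $u$ and then in $v$ produces \emph{two} factors of $(1+\rho_1+\overline{\rho_2})^{-1}$, i.e.\ the weight $(1+\lvert\gamma_1-\gamma_2\rvert)^{-2}$, and with the square in the denominator the inner sum over $\rho_2$ genuinely converges and is $\ll\log q(\lvert\gamma_1\rvert+2)$, as the paper verifies via Theorem~\ref{thm: vertical zeroes}. Everything else in your write-up (the explicit formula with error $X^3\log^4 qX/T^2$, the bound $\lvert c_\rho\rvert^2\ll\min(\theta^2,\lvert\gamma\rvert^{-2})$, and the AM--GM symmetrization) matches the paper's argument and is fine, but without the extra $v$-average the stated bound is not reached.
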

\begin{proof} 
    Since $\int_1^4|\Delta(u,\theta u,\chi)|^2\diff u\ll 1$, it suffices to consider the integral over $[4,X]$. By non-negativity we have
    \begin{equation}\label{eq: add integral}
    \begin{split}
        \int_{4}^{X}| \Delta_\psi(u,\theta u,\chi)|^2 \diff u=\int_1^2 \int_{4}^{X}| \Delta_\psi(u,\theta u,\chi)|^2 \diff u\diff v\leq \int_1^2 \int_{2v}^{Xv}| \Delta_\psi(u,\theta u,\chi)|^2 \diff u \diff v.
    \end{split}
    \end{equation}
    By Theorem \ref{thm: chebyshev approx zeroes} we have for $4\leq T\leq X$
    \begin{equation}\label{eq: Yv to sum over zeroes}
    \begin{split}
        \int_{2v}^{Xv}| \Delta_\psi(u,\theta u,\chi)|^2 \diff u=& \int_{2v}^{Xv}\bigg|\sum_{\substack{\rho \thinspace (\chi)\\|\gamma|\leq T}} \frac{1-(1-\theta )^\rho}{\rho} u^\rho  \bigg|^2\diff u+\mathcal{O}\bigg(\frac{X^3}{T^2}\log^4 qX\bigg).
    \end{split}
    \end{equation}
    By Theorems \ref{thm: vertical zeroes} \& \ref{thm: Siegel zero bound}, as well as taking e.g. the classical zero-free region \mbox{\cite[Theorem 11.3]{MV06}}, the integral on the right hand side converges absolutely. Therefore we may exchange the order of integration and summation to obtain
    \begin{equation}\label{eq: Yv calc}
    \begin{split}
        \int_{2v}^{Xv}\bigg|\sum_{\substack{\rho \thinspace (\chi)\\|\gamma|\leq T}} \frac{1-(1-\theta )^\rho}{\rho} u^\rho \bigg|^2\diff u =&\sum_{\substack{\rho_1,\rho_2\thinspace(\chi)\\|\gamma_1|,|\gamma_2|\leq T}} \frac{1-(1-\theta )^{\rho_1}}{\rho_1}  \frac{1-(1-\theta )^{\overline{\rho_2}} }{\overline{\rho_2}} \int_{2v}^{Xv} u^{\rho_1+\overline{\rho_2}} \diff u\\
        =&\sum_{\substack{\rho_1,\rho_2\thinspace(\chi)\\|\gamma_1|,|\gamma_2|\leq T}} \frac{1-(1-\theta )^{\rho_1} }{\rho_1}  \frac{1-(1-\theta )^{\overline{\rho_2}} }{\overline{\rho_2}}\\
        &\times\frac{X^{1+\rho_1+\overline{\rho_2}}-2^{1+\rho_1+\overline{\rho_2}}}{1+\rho_1+\overline{\rho_2}}\thinspace v^{1+\rho_1+\overline{\rho_2}}.
    \end{split}
    \end{equation}
    By Theorems \ref{thm: vertical zeroes} \& \ref{thm: Siegel zero bound}, as well as taking e.g. the classical zero-free region \mbox{\cite[Theorem 11.3]{MV06}}, the double sum converges absolutely and uniformly in $v\in[1,2]$. Therefore we have
    
    \begin{equation}
    \begin{split}
        \int_1^2\int_{2v}^{Xv}\bigg|\sum_{\substack{\rho \thinspace (\chi)\\|\gamma|\leq T}} \frac{1-(1-\theta )^\rho }{\rho} u^\rho \bigg|^2\diff u\diff v \ll \sum_{\substack{\rho_1,\rho_2\thinspace(\chi)\\|\gamma_1|,|\gamma_2|\leq T}} \frac{1-(1-\theta )^{\rho_1} }{\rho_1}  \frac{1-(1-\theta )^{\overline{\rho_2}} }{\overline{\rho_2}} \frac{X^{1+\rho_1+\overline{\rho_2}}}{(1+\rho_1+\overline{\rho_2})^2}.
    \end{split}
    \end{equation}
    Applying the inequality $|ab|\leq |a|^2+|b|^2$ permits us to rearrange the symmetrical double sum to obtain

    \begin{equation}\label{eq: rearranged rho2}
    \begin{split}
        \int_1^2\int_{2v}^{Xv}\bigg|\sum_{\substack{\rho \thinspace (\chi)\\|\gamma|\leq T}} \frac{1-(1-\theta )^\rho }{\rho} u^\rho \bigg|^2\diff u\diff v \ll \sum_{\substack{\rho_1\thinspace(\chi)\\|\gamma_1|\leq T}}X^{1+2\beta_1} \frac{|1-(1-\theta )^{\rho_1}|^2}{\gamma_1^2} \sum_{\substack{\rho_2\thinspace(\chi)\\|\gamma_2|\leq T}}  \frac{1}{(1+|\gamma_1-\gamma_2|)^2}.
    \end{split}
    \end{equation}
    By applying Theorem \ref{thm: vertical zeroes}, the inner sum of \eqref{eq: rearranged rho2} can be bounded by 
    \begin{equation*}
    \begin{split}
        \sum_{\substack{\rho_2\thinspace(\chi)\\|\gamma_2|\leq T}}  \frac{1}{(1+|\gamma_1-\gamma_2|)^2}\ll& \sum_{\substack{\gamma_1\leq n\leq T}}  \frac{\log qn}{(1+|\gamma_1-n|)^2}+\sum_{\substack{-T\leq n<\gamma_1}}  \frac{\log qn}{(1+|\gamma_1-n|)^2}\\
        \ll& \log q(|\gamma_1|+2)\sum_{\substack{1\leq n\leq 2T}}  \frac{\log n}{n^2}\ll \log q(|\gamma_1|+2).
    \end{split}
    \end{equation*}
    
    For $\gamma \leq \theta^{-1}$, we take the identity
    
    \begin{equation*}
    \begin{split}
        |1-(1-\theta )^{\rho} |^2=&1-2 (1-\theta )^{\beta} \cos(\gamma\log(1-\theta )) +(1-\theta)^{2\beta}\\
        =& (1-(1-\theta )^{\beta})^2 +\mathcal{O}(\theta^2\gamma^2),
    \end{split}
    \end{equation*}
    and otherwise we take the trivial upper bound. We obtain
    
    \begin{equation}\label{eq: rearranged rho2 with upper bound}
    \begin{split}
        &\int_1^2\int_{2v}^{Xv}\bigg|\sum_{\substack{\rho \thinspace (\chi)\\|\gamma|\leq T}} \frac{1-(1-\theta )^\rho}{\rho} u^\rho \bigg|^2\diff u\diff v \ll \sum_{\substack{\rho_1\thinspace(\chi)\\|\gamma_1|\leq T}}X^{1+2\beta_1}\min(\theta^2,|\gamma_1|^{-2}) \log q(|\gamma_1|+2). 
    \end{split}
    \end{equation}

    Combining \eqref{eq: add integral}, \eqref{eq: Yv to sum over zeroes} \& \eqref{eq: rearranged rho2 with upper bound} concludes the proof.
    
\end{proof}

Just like for primes in all short arithmetic progressions, it proves useful to get a baseline result without invoking any zero-density estimates. This is because some logarithmic factors are lost for $\frac{1}{A}=\eta_0>0$, which particularly matters for the GRH case. The following Lemma in conjunction with Lemmas \ref{lemma: psi to vartheta}--\ref{lemma: int psi to chi} \& \ref{lemma: substitution} proves Theorem \ref{thm: Ingham 2}.

\begin{lemma}\label{lemma: almost all ingham psi}
    Assume Condition \ref{hyp: zero-free region}. Let $q\leq Q$ and $\theta\in[q X^{-2\eta(X)}\log^{2+\varepsilon} qX,1]$ for any $\varepsilon>0$. We have
    \begin{equation*}
        \frac{1}{\varphi(q)}\sum_{\chi\thinspace(q)}\int_{1}^{X}| \Delta_\psi(u,\theta u,\chi)|^2 \diff u\ll \theta^2 X \cB(X^2,q) \log q+\theta X^{3-2\eta(X)} \log^2 \frac{2q}{\theta}.
    \end{equation*}
    We may omit the term $\cB$ if we additionally restrict $q\leq \eta^{-\ell}(X)$ for any $\ell>0$.
\end{lemma}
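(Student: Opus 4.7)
The plan is to combine Lemma~\ref{lemma: int theta chi as sum over zeroes}, applied with $T=X$, with the zero-free region from Condition~\ref{hyp: zero-free region} and the vertical zero count from Theorem~\ref{thm: vertical zeroes}. Taking $T=X$ in Lemma~\ref{lemma: int theta chi as sum over zeroes}, the tail $X^3/T^2\log^4 qX$ collapses to $X\log^4 qX$, which is absorbed into the target bound whenever $|\theta|\ge qX^{-2\eta(X)}\log^{2+\varepsilon}qX$ (one verifies this directly using $\eta(X)\le 1/2$ and the imposed lower bound on $|\theta|$). What remains is to estimate
\begin{equation*}
\frac{1}{\varphi(q)}\sum_{\chi\thinspace(q)}\sum_{\substack{\rho\thinspace(\chi)\\|\gamma|\le X}} X^{1+2\beta}\min(\theta^2,|\gamma|^{-2})\log q(|\gamma|+2).
\end{equation*}
Condition~\ref{hyp: zero-free region} splits this into two parts: a single potential exceptional zero $\beta_0$ of the quadratic character, and all remaining zeros, for which $\beta\le 1-\eta(X)$.

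For the non-exceptional part I pull out $X^{1+2\beta}\le X^{3-2\eta(X)}$ and reduce matters to bounding
\begin{equation*}
S := \frac{1}{\varphi(q)}\sum_{\chi\thinspace(q)}\sum_{\substack{\rho\thinspace(\chi)\\|\gamma|\le X}}\min(\theta^2,|\gamma|^{-2})\log q(|\gamma|+2).
\end{equation*}
Setting $M(t):=\sum_{\chi\thinspace(q)}N(0,t,\chi)$, Theorem~\ref{thm: vertical zeroes} gives $M(t)\ll \varphi(q)\thinspace t\log qt$, and I compute $S$ as a Riemann--Stieltjes integral against $M$. Splitting the integration at $t=1/|\theta|$ (below which $\min=\theta^2$ and above which $\min=t^{-2}$), each piece contributes $\ll |\theta|\log^2(2q/|\theta|)$, yielding the second term of the lemma.

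For the exceptional zero $\beta_0$, which can occur in at most one character modulo $q$, the total contribution is $\varphi(q)^{-1}X^{1+2\beta_0}\theta^2\log q=\theta^2 X\thinspace\cB(X^2,q)\log q$, exactly matching the first term. To omit $\cB$ under the restriction $q\le \eta^{-\ell}(X)$, I invoke Theorem~\ref{thm: Siegel zero bound} with $\varepsilon_0<1/\ell$ to get $X^{2\beta_0-2}\ll \exp(-2c_0 \log X\thinspace \eta^{\ell\varepsilon_0}(X))$; for small enough $\varepsilon_0$ this dominates $X^{-2\eta(X)}$, so the exceptional term is absorbed into the non-exceptional one.

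The main obstacle is the bookkeeping of logarithmic factors: a naive bound $N(0,t,\chi)\ll t\log qt$ per character combined with a crude $\varphi(q)\le q$ would introduce a spurious $q/\varphi(q)\ll \log\log q$ loss and push the final exponent on $\log(2q/|\theta|)$ above $2$. Working with the total $M(t)$ from the outset, so that the $\varphi(q)$ appearing in the vertical zero count cancels exactly with the $1/\varphi(q)$ from the averaging, is precisely what keeps the final bound at $\log^2(2q/|\theta|)$ and matches the $\log^2$ factor in Theorem~\ref{thm: Ingham 2}.
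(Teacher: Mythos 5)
Your proposal is correct and follows essentially the same route as the paper: apply Lemma~\ref{lemma: int theta chi as sum over zeroes} with $T=X$, isolate the possible exceptional zero to produce the $\cB(X^2,q)$ term, bound the remaining zeros via $X^{1+2\beta}\le X^{3-2\eta(X)}$ from Condition~\ref{hyp: zero-free region}, count zeros with Theorem~\ref{thm: vertical zeroes} split at $|\gamma|=|\theta|^{-1}$ to get $|\theta|\log^2\frac{2q}{|\theta|}$, and remove $\cB$ via Theorem~\ref{thm: Siegel zero bound} exactly as in \eqref{eq: omit siegel zeroes}; phrasing the count as a Riemann--Stieltjes integral against $\sum_{\chi}N(0,t,\chi)$ rather than per character is only a cosmetic difference. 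Your closing remark about a supposed $q/\varphi(q)$ loss is a non-issue (summing the per-character bound over the $\varphi(q)$ characters and dividing by $\varphi(q)$ loses nothing), but it does not affect the argument.
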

\begin{proof}
    We apply Lemma \ref{lemma: int theta chi as sum over zeroes} and then Theorem \ref{thm: vertical zeroes} to obtain 
    \begin{equation}
    \begin{split}
        \frac{1}{\varphi(q)}\sum_{\chi\thinspace(q)}\int_{1}^{X}| \Delta_\psi(u,\theta u,\chi)|^2 \diff u \ll& \theta^2 X \cB(X^2,q)\log q+\frac{X^3}{T^2}\log^4 qX\\
        &+\frac{X^{3-2\eta(T)}}{\varphi(q)}\sum_{\chi\thinspace(q)}\bigg[\sum_{\substack{\rho \thinspace (\chi)\\ |\gamma|\leq \theta^{-1}}}\theta^2 \log \frac{2q}{\theta}+ \sum_{\substack{\rho \thinspace (\chi)\\ \theta^{-1}<|\gamma|\leq T}}\frac{\log q\gamma}{\gamma^2}\bigg]\\
        \ll& \theta^2 X \cB(X^2,q)\log q +\theta X^{3-2\eta(T)} \log^2 \frac{2q}{\theta}+\frac{X^3}{T^2}\log^4 qX
    \end{split}
    \end{equation}
    Here we may choose $T=X$. To omit the term $\cB$ we invoke Theorem \ref{thm: Siegel zero bound} and proceed as in \eqref{eq: omit siegel zeroes}.
\end{proof}

For the case $\frac{1}{A}>\eta_0$ we can do significantly better. The following Lemma in conjunction with Lemmas \ref{lemma: psi to vartheta}--\ref{lemma: int psi to chi} \& \ref{lemma: substitution} proves Theorem \ref{thm: primes in almost all intervals}.

\begin{lemma}
    Assume Conditions \ref{hyp: zero-free region} \& \ref{hyp: zero-density} with $\frac{1}{A}>\eta_0$. Let $q\leq Q$ and
    \begin{equation*}
        \theta\in [qX^{-\frac{2}{A}}\exp(\eta^{-1}(X^\frac{2}{A})\log(g(q,X)\log^{2+\varepsilon} qX),1]
    \end{equation*}
    for any $\varepsilon>0$. We have 
    \begin{equation*}
    \begin{split}
        \frac{1}{\varphi(q)}\sum_{\chi\thinspace(q)}\int_1^X |\Delta_\psi(u,\theta u,\chi)|^2\diff u \ll& \theta^2X\cB(X^2,q)\log q\\
        &+\theta^2X^3 \exp\bigg(\frac{\theta}{qX^{-\frac{2}{A}}}\bigg)^{-A\eta(X)}\frac{g(q,X)\log^2 qX}{\varphi(q)}  .
    \end{split}
    \end{equation*}
    We may omit the term $\cB$ if we additionally restrict $q\leq \eta^{\ell}(X)$ for any $\ell>0$.
\end{lemma}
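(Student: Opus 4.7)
The plan is to follow the same template as the proof of Lemma~\ref{lemma: almost all ingham psi}: apply Lemma~\ref{lemma: int theta chi as sum over zeroes} with $T=X$ to reduce the integral to a sum over non-trivial zeros of Dirichlet $L$-functions, then bound that sum by combining Conditions~\ref{hyp: zero-free region} and~\ref{hyp: zero-density} through Riemann--Stieltjes integration. The error $X^3/T^2\log^4 qX = X\log^4 qX$ left over by Lemma~\ref{lemma: int theta chi as sum over zeroes} is absorbed by the main term given the lower bound on $|\theta|$, and the exceptional zero contribution (if present) is isolated as in Lemma~\ref{lemma: almost all ingham psi}, producing the $\cB(X^2,q)$ term and being omissible under the extra restriction on $q$ via Theorem~\ref{thm: Siegel zero bound} exactly as in \eqref{eq: omit siegel zeroes}.

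For the non-exceptional zeros I would split the sum according to whether $|\gamma|\le \theta^{-1}$ (where $\min(\theta^2,|\gamma|^{-2}) = \theta^2$) or $|\gamma|>\theta^{-1}$ (where the minimum is $|\gamma|^{-2}$). Setting $S_q(\sigma,U) := \sum_{\chi\thinspace(q)} N(\sigma,U,\chi)$ and integrating by parts,
\[
\sum_{\chi\thinspace(q)}\sum_{\substack{\rho\thinspace(\chi)\\|\gamma|\le U,\ \beta\le 1-\eta(U)}} X^{2\beta} \;=\; S_q(0,U) + 2\log X\int_0^{1-\eta(U)} S_q(\sigma,U)\, X^{2\sigma}\,\diff\sigma.
\]
Theorem~\ref{thm: vertical zeroes} handles the range $\sigma<1/2$ via the trivial bound $S_q(\sigma,U)\le S_q(0,U)\ll qU\log qU$, while on $[1/2,\,1-\eta(U)]$ Condition~\ref{hyp: zero-density} gives $S_q(\sigma,U)\ll (qU)^{A(1-\sigma)} g(q,U)$. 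Substituting $t=1-\sigma$, the resulting integral $X^2\int_{\eta(U)}^{1/2}((qU)^A/X^2)^t\,\diff t$ is dominated by the left endpoint $t=\eta(U)$ whenever $(qU)^A<X^2$, producing
\[
\sum_{\chi\thinspace(q)}\sum_{\substack{\rho\thinspace(\chi)\\|\gamma|\le U,\ \beta\le 1-\eta(U)}} X^{2\beta} \;\ll\; qUX\log qU \,+\, \frac{g(q,U)\,X^{2-2\eta(U)}(qU)^{A\eta(U)}\log X}{\log(X^2/(qU)^A)}.
\]

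Applying this with $U=\theta^{-1}$ (for which $(qU)^A<X^2$ by the hypothesis $|\theta|\ge qX^{-2/A}$) bounds the $|\gamma|\le\theta^{-1}$ piece. For the $|\gamma|>\theta^{-1}$ piece I would decompose dyadically over $|\gamma|\in[V,2V]$ with $\theta^{-1}\le V\le X$; since $A\eta_0<1<2$, the coefficient $V^{A\eta(V)-2}$ appearing after multiplication by $V^{-2}\log qV\cdot X/\varphi(q)$ is decreasing in $V$, so the dyadic sum is dominated by $V=\theta^{-1}$ and matches the order of the first piece. Because $\eta$ is non-increasing and $\theta^{-1}\le X^{2/A}/q\le X$, one may then pass to $\eta(X)$ in the exponent via
\[
\bigl(qX^{-2/A}/|\theta|\bigr)^{A\eta(\theta^{-1})} \;\le\; \bigl(qX^{-2/A}/|\theta|\bigr)^{A\eta(X)},
\]
yielding the desired factor $|\theta|^2 X^3\bigl(|\theta|/(qX^{-2/A})\bigr)^{-A\eta(X)}$.

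The main obstacle will be the bookkeeping that collapses everything to the clean estimate: one must check that the lower bound on $|\theta|$ absorbs both the trivial contribution $qUX\log qU$ from the $\sigma<1/2$ range, the denominator $1/\log(X^2\theta^A/q^A)$, and the leftover $X\log^4 qX/\varphi(q)$ from Lemma~\ref{lemma: int theta chi as sum over zeroes}. The specific form $\exp(\eta^{-1}(X)\log(g(q,X)\log^{2+\varepsilon}qX))$ in the hypothesis is tailored for this: raising $|\theta|/(qX^{-2/A})$ to the power $A\eta(X)$ produces at least $(g(q,X)\log^{2+\varepsilon}qX)^A$, which provides more than enough slack to absorb every lower-order error into the stated bound $|\theta|^2X^3\bigl(|\theta|/(qX^{-2/A})\bigr)^{-A\eta(X)}g(q,X)\log^2 qX/\varphi(q)$.
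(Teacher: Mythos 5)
Your route is essentially the paper's: reduce via Lemma~\ref{lemma: int theta chi as sum over zeroes} with $T=X$, split the zero sum at $|\gamma|=|\theta|^{-1}$, run Riemann--Stieltjes/partial summation against Theorem~\ref{thm: vertical zeroes} and Condition~\ref{hyp: zero-density} inside the zero-free region of Condition~\ref{hyp: zero-free region}, pass from $\eta(|\theta|^{-1})$ to $\eta(X)$ by monotonicity, isolate the exceptional zero into $\cB(X^2,q)$, and absorb the leftover $X\log^4 qX$ using the lower bound on $|\theta|$. The low-ordinate piece $|\gamma|\le|\theta|^{-1}$ is handled correctly, since there $(q|\theta|^{-1})^A<X^2$ is guaranteed by the hypothesis on $|\theta|$.

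There is, however, a genuine gap in the high-ordinate range. Your per-block estimate carrying the factor $X^{2-2\eta(U)}(qU)^{A\eta(U)}/\log\bigl(X^2/(qU)^A\bigr)$ was derived only under the assumption $(qU)^A<X^2$, but in the dyadic decomposition $V$ runs up to $X$, and for $V>X^{2/A}/q$ (which is most of the range, since $|\theta|^{-1}$ may be as large as $X^{2/A}/q$ up to logarithmic factors) that assumption fails: the quantity $\log\bigl(X^2/(qV)^A\bigr)$ is then negative, and the integral $\int_{\eta(V)}^{1/2}\bigl((qV)^A/X^2\bigr)^t\diff t$ is dominated by the right endpoint $t=1/2$, giving roughly $X(qV)^{A/2}g(q,V)$ per block rather than the expression you sum. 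After the weight $XV^{-2}\log qV/\varphi(q)$, the block $V\asymp X$ contributes about $g(qX)^{A/2}\log qX/\varphi(q)$, which is \emph{not} absorbed into the stated bound once $A>4$ (Condition~\ref{hyp: zero-density} allows any $A\ge 2$), and even for $2\le A\le 4$ the step is unjustified as written. The repair is exactly the paper's choice of splitting height: split at $\beta=1-\frac1A$ rather than at $\beta=\frac12$, estimating the zeros with $\beta\le 1-\frac1A$ by Theorem~\ref{thm: vertical zeroes} weighted by $X^{2-2/A}$ (this produces the harmless term $|\theta|X^{3-2/A}\log^2\frac{2q}{|\theta|}$, cf.\ \eqref{eq: int as sum over zeroes unconditional}), and applying Condition~\ref{hyp: zero-density} only for $\sigma\in[1-\frac1A,1-\eta]$, as in \eqref{eq: int sum bigger than theta-2}; then even when $(qV)^A>X^2$ the worst exponent is $A\cdot\frac1A=1$, the per-block contribution decays like $1/V$, and the whole range is absorbed for every $A\ge2$. (Your replacement of $g(q,|\theta|^{-1})$ by $g(q,X)$ is the same implicit step the paper takes, so I do not count it against you.)
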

\begin{proof}

    By Lemmas \ref{lemma: int theta chi as sum over zeroes} and Theorem \ref{thm: vertical zeroes} we have
    \begin{equation}\label{eq: int as sum over zeroes unconditional}
    \begin{split}
        \frac{1}{X}\int_{1}^{X}| \Delta_\psi(u,\theta u,\chi)|^2 \diff u \ll& \theta X^{2-\frac{2}{A}} \log^2 \frac{2q}{\theta}+ \theta^2 \log \frac{2q}{\theta}\sum_{\substack{\rho \thinspace (\chi)\\\beta> 1-\frac{1}{A}\\|\gamma|\leq \theta^{-1}}}X^{2\beta}\\
        &+\sum_{\substack{\rho \thinspace (\chi)\\\beta> 1-\frac{1}{A}\\\theta^{-1}< |\gamma|\leq T}}X^{2\beta}\frac{\log q\gamma}{\gamma^2}+\frac{X^2}{T^2}\log^4 qX.
    \end{split}
    \end{equation}
    We desire to evaluate \eqref{eq: int as sum over zeroes unconditional} averaged over all $\chi\thinspace(q)$. For this we look at each sum on the right hand side separately. By Riemann-Stieltjes integration (see e.g. \cite[Theorem A.2]{MV06}) we obtain
    
    \begin{equation}\label{eq: int sum smaller than theta-2}
    \begin{split}
        \frac{1}{\varphi(q)}\sum_{\chi\thinspace (q)}\sum_{\substack{\rho \thinspace (\chi)\\\beta> 1-\frac{1}{A} \\ |\gamma|\leq \theta^{-1}}}X^{2\beta} \ll& \cB(X^2,q)+  \frac{X^{2-\frac{2}{A}}}{\varphi(q)} \sum_{\chi\thinspace(q)} \cN(0,\theta^{-1},\chi)\\
        &+ \frac{\log X}{\varphi(q)}\int_{1-\frac{1}{A}}^{1-\eta(\theta^{-1})} X^{2u} \sum_{\chi\thinspace(q)}\cN(u,\theta^{-1},\chi)du\\
        \ll& \cB(X^2,q)+\frac{X^{2-\frac{2}{A}}}{\theta} \log \frac{2q}{\theta} \\
        &+\frac{g(q,\theta^{-1})}{\varphi(q)}\log X\int_{1-\frac{1}{A}}^{1-\eta(\theta^{-1})}  X^{2u}\bigg(\frac{q}{\theta}\bigg)^{A(1-u)} \diff u  \\
        \ll& \cB(X^2,q)+X^2\thinspace \bigg( \frac{\theta X^\frac{2}{A}}{q}\bigg)^{-A\eta(\theta^{-1})}\frac{g(q,\theta^{-1})}{\varphi(q)}\log X.
    \end{split}
    \end{equation}
    Analogously, we have 
    \begin{equation}\label{eq: int sum bigger than theta-2}
    \begin{split}
        \frac{1}{\varphi(q)}\sum_{\chi\thinspace (q)}\sum_{\substack{\rho \thinspace (\chi)\\\beta>1-\frac{1}{A}\\ \theta^{-1}<|\gamma|\leq T}}X^{2\beta}\frac{\log q\gamma}{\gamma^2} \ll& \theta X^{2-\frac{2}{A}}\log^2 \frac{2q}{\theta}\\
        &+\frac{\log X}{\varphi(q)}  \int_{\theta^{-1}}^T\int_{1-\frac{1}{A}}^{1-\eta(t)}X^{2u}\sum_{\chi\thinspace (q)}\cN(u,t,\chi) \frac{\log qt}{t^3} \diff u\diff t\\ 
        \ll& X^2\frac{g(q,T)\log^2 qX}{\varphi(q)}  \int_{\theta^{-1}}^T \bigg(\frac{X^2}{(qt)^A}\bigg)^{-\eta(t)}\frac{\diff t}{t^3}\\ 
        \ll& \theta^2X^2 \bigg( \frac{\theta X^\frac{2}{A}}{q}\bigg)^{-A\eta(T)}\frac{g(q,T)\log^2 qX}{\varphi(q)}   .
    \end{split}
    \end{equation}
    We combine \eqref{eq: int as sum over zeroes unconditional}, \eqref{eq: int sum smaller than theta-2} \& \eqref{eq: int sum bigger than theta-2}, and take $T=X^\frac{2}{A}$. To omit the term $\cB$ we invoke Theorem ~\ref{thm: Siegel zero bound} and proceed as in \eqref{eq: omit siegel zeroes}.
\end{proof}

\section*{Acknowledgments.} 
The author thanks their supervisors Bryce Kerr and Igor E. Shparlinski for their valuable input. This research has been supported by a University International Postgraduate Award (UIPA) and a University of New South Wales School of Mathematics scholarship.

\end{document}